\newcommand{\R}{\mathbb{R}}
\newcommand{\Q}{\mathbb{Q}}
\newcommand{\NNet}{\mathcal{N}}
\newcommand{\Safety}{\mathrm{Safety}}
\newcommand{\ReLU}{\mathrm{ReLU}}
\newcommand{\Exact}{\mathrm{Exact}}
\newcommand{\SAT}{\mathrm{SAT}}
\newcommand{\UNSAT}{\mathrm{UNSAT}}
\newcommand{\UNKNOWN}{\mathrm{UNKNOWN}}
\newcommand{\SMT}{\mathrm{SMT}}
\newcommand{\MILP}{\mathrm{MILP}}
\newcommand{\Aff}{\Phi_{\mathrm{aff}}}
\newcommand{\Dom}{\Phi_{D}}
\newcommand{\NegP}{\Phi_{\neg P}}
\newcommand{\Rel}{\Phi_{\mathrm{rel}}}
\newcommand{\Learn}{\Phi_{\mathrm{learn}}}
\newcommand{\Crelax}{C_{\mathrm{relax}}}
\newcommand{\Cexact}{C_{\mathrm{exact}}}
\theoremstyle{definition}
\newtheorem{definition}{Definition}[section]
\newtheorem{assumption}[definition]{Assumption}
\theoremstyle{plain}
\newtheorem{theorem}{Theorem}[section]
\newtheorem{proposition}[theorem]{Proposition}
\newtheorem{lemma}[theorem]{Lemma}
\theoremstyle{remark}
\newtheorem{remark}[theorem]{Remark}
\title{\textbf{Incremental Certificate Learning for Hybrid Neural Network Verification}\\
\vspace{0.3em}
\large A Solver Architecture for Piecewise-Linear Safety Queries}
\author{
Chandrasekhar Gokavarapu\\
Government College (Autonomous), Rajahmundry, India\\
\texttt{chandrasekhargokavarapu@gmail.com}
}
\date{December 2025}
\begin{document}
\maketitle

\begin{abstract}
Formal verification of deep neural networks is increasingly required in safety-critical
domains, yet exact reasoning over piecewise-linear (PWL) activations such as ReLU
suffers from a combinatorial explosion of activation patterns. This paper develops a
solver-grade methodology centered on \emph{incremental certificate learning}:
we maximize the work performed in a sound linear relaxation (LP propagation,
convex-hull constraints, stabilization), and invoke exact PWL reasoning only through
a selective \emph{exactness gate} when relaxations become inconclusive.
Our architecture maintains a node-based search state together with a reusable
global lemma store and a proof log. Learning occurs in two layers:
(i) \emph{linear lemmas} (cuts) whose validity is justified by checkable certificates, and
(ii) \emph{Boolean conflict clauses} extracted from infeasible guarded cores, enabling
DPLL(T)-style pruning across nodes. We present an end-to-end algorithm (ICL-Verifier)
and a companion hybrid pipeline (HSRV) combining relaxation pruning, exact checks, and
branch-and-bound splitting. We prove soundness, and we state a conditional completeness
result under exhaustive splitting for compact domains and PWL operators. Finally, we
outline an experimental protocol against standardized benchmarks (VNN-LIB / VNN-COMP)
to evaluate pruning effectiveness, learned-lemma reuse, and exact-gate efficiency.
\end{abstract}

\noindent\textbf{Keywords:} Neural network verification; ReLU networks; proof certificates; incremental LP propagation; branch-and-bound; SMT/MILP hybrid solving.

\noindent\textbf{Mathematics Subject Classification (2020):} 68T07; 68N30; 68Q60; 90C05; 90C25.

\section{Introduction}
Deep neural networks are deployed as perception and decision modules in autonomous
systems, robotics, and medical AI. In such settings, black-box testing is insufficient:
it samples finitely many points and cannot guarantee that \emph{all} inputs in a domain
satisfy a safety property. Formal verification seeks universal guarantees by deciding
the existence (or non-existence) of a counterexample \cite{huangCAV17,pulinaCAV10}
.

In the ReLU setting, networks are piecewise-affine over finitely many regions, but the
number of regions grows exponentially with depth and width \cite{bunelUnified}
. Exact verification methods
based on SMT or MILP encodings are complete in principle, yet often face state explosion \cite{reluplex2017,ehlers2017,mipverify2019,andersonMP20,barrettSMT09,z3}
.
Sound relaxations (abstract interpretation / convex relaxations) scale better but can
return $\UNKNOWN$ \cite{ai2,deeppoly2019,reluval,neurify}
. This paper targets the solver-design gap between the two extremes:
\emph{how to combine} fast relaxation propagation with selective exact reasoning and
cross-node learning, while maintaining correctness guarantees.

\noindent\textbf{Companion foundation (Paper 1)..}
We treat \emph{certificate checking and normalization} as a companion foundational module:
a solver may produce pruning or entailment evidence, but the exported artifacts are rational
certificates checkable by exact arithmetic. We therefore keep this paper solver-centric:
we specify \emph{when} and \emph{how} certificates are generated/consumed inside an
incremental verifier, and we reference the companion paper for certificate objects
and checkers.

\noindent\textbf{What is new in this paper..}
Existing complete verifiers (SMT/MILP) and scalable relaxations (abstract domains) can be combined,
but typical integrations do not provide a \emph{composable proof artifact} for the whole verification run,
and their learning is rarely formulated as a sound proof rule  \cite{marabouCAV19,marabouTACAS24,fastcompleteICLR21,syrnn}
.
This paper introduces a proof-producing hybrid verifier based on
\emph{Incremental Certificate Learning with Branch--Merge Lemmas (BM--ICL)}:
node-level certificates (Farkas infeasibility, dual bounds, and guarded cores) are not only used for pruning,
but are \emph{composed} into (i) globally reusable learned constraints and (ii) a single tree-structured
certificate that implies global safety on the full domain.
Our novelty is therefore \emph{proof-calculus level}, not merely an implementation variant.

\section{Problem Setup: Counterexample Queries for ReLU Networks}
Let $\NNet:\R^n\to \R^m$ be a feed-forward network with $L$ layers.
Write $z^{(0)}=x$ and for $i=1,\dots,L$:
\[
s^{(i)} = W^{(i)} z^{(i-1)} + b^{(i)},\qquad
z^{(i)} = \sigma(s^{(i)}),
\]
with $\sigma=\ReLU$ on hidden layers (and typically identity on the output).
Let $D\subseteq \R^n$ be a compact input domain (box or polytope).
Let $\Safety(z^{(L)})$ be a safety predicate representable by linear constraints.

\begin{definition}[Counterexample query]
Verification of $\Safety$ over $D$ reduces to the satisfiability of:
\[
\exists x\in D,\ \bigwedge_{i=1}^L z^{(i)}=\sigma(W^{(i)}z^{(i-1)}+b^{(i)})
\ \land\ \neg \Safety(z^{(L)}).
\]
If satisfiable, any satisfying $x$ is a counterexample; if unsatisfiable, $\Safety$ holds for all $x\in D$  \cite{reluplex2017,ehlers2017,andersonMP20}
.
\end{definition}
\section{Architecture Overview}
\label{sec:arch}

We model the verifier as a \emph{certificate-carrying transition system}: each engine transforms nodes $(R,\alpha,C,W)$ while preserving the soundness invariant $\Exact(R,\alpha)\subseteq \mathrm{Feas}(C)$.

\subsection{Four interacting engines as a transition system}
\label{subsec:engines}

We structure the verifier into four engines that communicate incrementally. Each engine implements
a family of transitions on nodes and optionally emits proof artifacts.

\begin{enumerate}[leftmargin=2.2em]
\item \textbf{Search manager.}
Maintains a worklist of nodes and applies a \emph{refinement operator}
\[
\mathsf{Refine} : (R,\alpha) \longmapsto \{(R_k,\alpha_k)\}_{k\in K}
\]
that partitions a parent obligation into child obligations. Refinement may be:
(i) \emph{domain splitting} of $R$ into subregions, or
(ii) \emph{phase splitting} that extends $\alpha$ by committing one unstable ReLU to active/inactive.
The search manager also controls global scheduling (best-first, DFS, portfolio).

\item \textbf{Propagation engine.}
Given a node, constructs and iteratively tightens a \emph{sound linear relaxation store} $C$
using: convex-hull insertion for unstable ReLUs, certified bound tightening, stabilization,
and lemma injection. The propagation engine returns either $\mathrm{PRUNE}$ with an infeasibility
certificate, or $\mathrm{OPEN}$ together with strengthened bounds and a refined unstable set.

\item \textbf{Exactness gate.}
When propagation cannot close the node, the exactness gate selectively strengthens the node by
enforcing exact ReLU semantics on a reduced unstable subset (Section~\ref{sec:gate}).
The gate has three logically distinct outcomes:
(i) validated $\SAT$ with counterexample,
(ii) $\mathrm{PRUNE}$ with UNSAT evidence (possibly partial exactness, but sound), or
(iii) $\mathrm{DEFER}$, indicating that refinement (splitting) is required.

\item \textbf{Learning \& proof logger.}
Aggregates evidence across nodes and turns it into reusable constraints:
(i) \emph{certified linear lemmas} added to $\Learn$, and
(ii) \emph{guarded conflict clauses} stored in a Boolean layer (phase-level learning).
Crucially, learning is required to be \emph{monotone} (it only adds valid consequences),
so that previously verified nodes remain verified.
\end{enumerate}

\noindent\textbf{Certificate-carrying interfaces.}
Each engine communicates via checkable objects $(R,\alpha,C,W)$, enabling proof production and branch--merge learning.

\subsection{Node state, semantics, and the relaxation store}
\label{subsec:node-semantics}

We work with a global variable vector $v$ that collects all network variables used in the encoding
(input, pre-activations, post-activations, and any auxiliary variables).
The following definition is the architectural cornerstone: every engine transition must preserve
its intended semantics.

\begin{definition}[Node state]
\label{def:node}
A search node is a tuple
\[
\mathrm{Node}=(R,\alpha,C,W),
\]
where
$R\subseteq D$ is the current input region,
$\alpha$ is a partial assignment of ReLU phases (optional),
$C$ is a \emph{sound linear relaxation store} induced by $(R,\alpha)$, and
$W$ is a proof log (certificates / cores) produced at this node.
\end{definition}

\noindent\textbf{Exact feasibility region..}
Let $\Exact(R,\alpha)$ denote the set of all assignments $v$ consistent with the \emph{exact} network semantics,
the region constraints $x\in R$, and the phase commitments encoded by $\alpha$.
(If $\alpha$ is empty, this is the full exact feasible set over $R$.)

\begin{assumption}[Node soundness invariant]
\label{assm:sound}
At every node, $C$ is a sound relaxation of the exact constraints under $(R,\alpha)$:
\[
\Exact(R,\alpha)\ \subseteq\ \mathrm{Feas}(C).
\]
Equivalently, every exact-feasible assignment for the node satisfies $C$.
\end{assumption}

Assumption~\ref{assm:sound} is preserved because propagation/learning only add valid consequences, the gate strengthens by exact constraints on selected units, and refinement partitions the exact set.

\subsection{Decomposed structure of the relaxation store}
\label{subsec:store-decomp}

Concretely, we view the store as
\[
C \;=\; \Aff\ \cup\ \Dom(R)\ \cup\ \NegP\ \cup\ \Rel\ \cup\ \Learn,
\]
where each block has a specific logical role.

\begin{enumerate}[leftmargin=2.2em]
\item $\Aff$: the affine constraints encoding all linear layers:
\[
s^{(i)} = W^{(i)} z^{(i-1)} + b^{(i)},\qquad i=1,\dots,L,
\]
together with any bookkeeping equalities defining $v$.

\item $\Dom(R)$: the region constraints, typically a box or polytope:
\[
x \in R.
\]
If $R$ is a box, this is a conjunction of interval bounds.

\item $\NegP$: the negated safety predicate $\neg\Safety(z^{(L)})$ encoded as linear constraints
(e.g.\ a violated margin or violated classification condition).

\item $\Rel$: the relaxation constraints for nonlinearities not fixed by $\alpha$.
For ReLU units, $\Rel$ contains either
(i) a convex-hull outer relaxation for unstable units, or
(ii) an exact linear specialization for stabilized/fixed-phase units.

\item $\Learn$: globally valid learned constraints (lemmas) equipped with checkable justification.
Only such lemmas may be injected across nodes.
\end{enumerate}

\noindent\textbf{New architectural hook: phase constraints as guards..}
To connect linear reasoning with phase splitting and clause learning, we treat each phase commitment
as a \emph{guard literal}. For a ReLU unit $\nu$ we introduce two guards
$\ell^{\mathrm{act}}_{\nu}$ and $\ell^{\mathrm{inact}}_{\nu}$ with exclusivity constraints
$\ell^{\mathrm{act}}_{\nu}\oplus \ell^{\mathrm{inact}}_{\nu}$.
The corresponding linear consequences are:
\[
\ell^{\mathrm{act}}_{\nu} \Rightarrow (z_{\nu}=s_{\nu}\ \wedge\ s_{\nu}\ge 0),\qquad
\ell^{\mathrm{inact}}_{\nu} \Rightarrow (z_{\nu}=0\ \wedge\ s_{\nu}\le 0).
\]
This guarded formulation is the common substrate for:
(i) phase splitting (Search manager),
(ii) UNSAT-core extraction (Exactness gate), and
(iii) conflict clause learning (Learning engine).

\subsection{Inter-engine contracts (new results)}
\label{subsec:contracts}

We now state the solver architecture as contracts between engines. These are not classical proofs:
they are \emph{interface theorems} guaranteeing that the pipeline remains sound regardless of
implementation choices, provided each engine respects its contract.

\begin{lemma}[Propagation contract]
\label{lem:prop-contract}
Assume the node invariant (Assumption~\ref{assm:sound}) holds at entry of propagation.
If propagation returns $\mathrm{PRUNE}$ with a Farkas certificate for infeasibility of
$C$, then $\Exact(R,\alpha)=\emptyset$ and the node is safely closed.
If it returns $\mathrm{OPEN}$, then it outputs an updated store $C'$ such that
\[
\Exact(R,\alpha)\subseteq \mathrm{Feas}(C') \subseteq \mathrm{Feas}(C),
\]
i.e.\ the relaxation is monotonically strengthened.
\end{lemma}

\begin{proof}
Propagation only adds constraints that are valid for all exact-feasible assignments
(convex-hull outer bounds, stability specializations, and certified lemmas).
Thus feasibility can only shrink while preserving soundness.
If an infeasibility certificate is produced for the linear system, then no $v$ can satisfy $C$,
hence no exact-feasible assignment exists either by Assumption~\ref{assm:sound}.
\end{proof}

\begin{lemma}[Learning monotonicity and proof preservation]
\label{lem:learn-monotone}
Suppose a learned lemma $\varphi$ is injected into $\Learn$ only if it is valid for all exact-feasible
assignments on the global domain $D$. Then, for any node $(R,\alpha,C,W)$,
adding $\varphi$ preserves the node invariant and cannot invalidate any previously established pruning proof.
\end{lemma}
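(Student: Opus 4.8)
The plan is to split the statement into its two assertions --- preservation of the node invariant and preservation of previously established pruning proofs --- and to discharge each by a short monotonicity argument, the first about feasible sets and the second about certificates. For the invariant, write $E_D$ for the set of all exact-feasible assignments on the global domain $D$; this is precisely the set over which the hypothesis asserts $\varphi$ to be valid. The first step is the containment $\Exact(R,\alpha)\subseteq E_D$: by Definition~\ref{def:node} we have $R\subseteq D$, so any $v\in\Exact(R,\alpha)$ satisfies the exact network semantics with $x\in R\subseteq D$ and hence lies in $E_D$; the extra phase commitments in $\alpha$ can only delete assignments, and if $\alpha$ is inconsistent with the exact semantics at some point then $\Exact(R,\alpha)=\emptyset$ and the containment is trivial. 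Since $\varphi$ was injected only because it is valid on all of $E_D$, we obtain $\Exact(R,\alpha)\subseteq\mathrm{Feas}(\{\varphi\})$. Combining this with the entry invariant $\Exact(R,\alpha)\subseteq\mathrm{Feas}(C)$ (Assumption~\ref{assm:sound}) and the identity $\mathrm{Feas}(C\cup\{\varphi\})=\mathrm{Feas}(C)\cap\mathrm{Feas}(\{\varphi\})$ yields $\Exact(R,\alpha)\subseteq\mathrm{Feas}(C\cup\{\varphi\})$, i.e.\ the invariant holds for the strengthened store.

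For the second assertion, recall from the propagation contract (Lemma~\ref{lem:prop-contract}) that a previously established pruning proof at a node is a Farkas certificate for infeasibility of that node's store $C$, from which one concludes $\Exact(R,\alpha)=\emptyset$. I would argue that the same certificate, with a zero multiplier assigned to $\varphi$, certifies infeasibility of $C\cup\{\varphi\}$: adding constraints to an already-infeasible linear system keeps it infeasible, so $\mathrm{Feas}(C\cup\{\varphi\})\subseteq\mathrm{Feas}(C)=\emptyset$. Hence the stored proof object remains well-formed and checkable, its conclusion $\Exact(R,\alpha)=\emptyset$ still follows --- now via the refreshed invariant of the first part --- and the node stays closed. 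Because injection is monotone, in the sense that stores only ever grow by valid consequences, this holds simultaneously for every already-closed node, so injecting $\varphi$ never forces re-opening a verified node.

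The step I expect to require the most care is not a calculation but a modeling point: pinning down the notion of a ``previously established pruning proof'' so that it is manifestly robust under store enlargement. One should fix a proof-log format in which Farkas multipliers are indexed by constraints, with the convention that unlisted constraints carry multiplier zero; then ``extend by zero'' is literally a no-op on the stored artifact and the argument above is immediate, the only genuinely substantive ingredient being the standard monotonicity of infeasibility --- and of feasible-set containment --- under adding constraints. It is also worth recording explicitly the degenerate cases, namely $\alpha$ empty or $\alpha$ inconsistent with the exact semantics, both of which the containment $\Exact(R,\alpha)\subseteq E_D$ subsumes uniformly.
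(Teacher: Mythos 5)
Your proposal is correct and follows essentially the same route as the paper's proof: global validity of $\varphi$ plus the containment $\Exact(R,\alpha)\subseteq\Exact(D,\emptyset)$ gives $\Exact(R,\alpha)\subseteq\mathrm{Feas}(C\cup\{\varphi\})$, and monotonicity of infeasibility under adding constraints preserves prior pruning certificates. You simply make explicit two steps the paper leaves implicit (the containment via $R\subseteq D$ and $\alpha$ only restricting, and the ``extend the Farkas multipliers by zero'' convention), which is a useful elaboration but not a different argument.
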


\begin{proof}
Global validity implies $\Exact(R,\alpha)\subseteq \{v:\varphi(v)\}$ for every node,
so Assumption~\ref{assm:sound} continues to hold after injection. Since $\varphi$ only removes models,
any prior infeasibility certificate remains valid, and any prior safety implication remains valid as well.
\end{proof}

\begin{lemma}[Refinement partition contract]
\label{lem:refine-contract}
Let $\mathsf{Refine}(R,\alpha)=\{(R_k,\alpha_k)\}_{k\in K}$.
Assume refinement is implemented so that the exact feasible set partitions:
\[
\Exact(R,\alpha)\ =\ \bigcup_{k\in K}\Exact(R_k,\alpha_k),
\qquad \Exact(R_i,\alpha_i)\cap \Exact(R_j,\alpha_j)=\emptyset\ \ (i\neq j).
\]
Then closing all children safely closes the parent.
\end{lemma}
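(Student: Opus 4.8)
The plan is to argue contrapositively and semantically: "closing a node safely" means we have established $\Exact(\cdot) = \emptyset$ (or equivalently a sound UNSAT certificate for that node, which by the propagation and gate contracts entails emptiness of the exact set). So suppose every child $(R_k,\alpha_k)$ has been safely closed, meaning $\Exact(R_k,\alpha_k) = \emptyset$ for each $k \in K$. Then taking the union over $k$ gives $\bigcup_{k\in K}\Exact(R_k,\alpha_k) = \emptyset$, and the partition hypothesis identifies this union with $\Exact(R,\alpha)$, hence $\Exact(R,\alpha)=\emptyset$ as well. This is exactly the condition for the parent node to be safely closed: no exact-feasible assignment survives, so no counterexample lies in the parent obligation.

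The key steps, in order, are: (1) unpack the meaning of "safely closed" at the child level into the semantic statement $\Exact(R_k,\alpha_k)=\emptyset$ — here I would lean on Lemma~\ref{lem:prop-contract} (a Farkas/UNSAT certificate at a node implies the exact set is empty) and the exactness-gate contract, so that whatever concrete evidence closed the child, it certifies emptiness of that child's exact region; (2) take the finite union over $k\in K$ and observe the union of empty sets is empty; (3) invoke the partition hypothesis of the lemma, $\Exact(R,\alpha) = \bigcup_{k\in K}\Exact(R_k,\alpha_k)$, to transfer emptiness to the parent; (4) conclude that the parent is safely closed, optionally noting that the parent's closure certificate can be taken to be the tuple of child certificates together with the (assumed-correct) implementation of $\mathsf{Refine}$, which is precisely the tree-structured proof artifact advertised in the introduction. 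Disjointness of the child exact sets is not actually needed for the safety direction — it matters for proof-size and for the completeness argument — so I would remark that only the covering inclusion $\Exact(R,\alpha)\subseteq\bigcup_k\Exact(R_k,\alpha_k)$ is used here.

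The main obstacle — really the only subtle point — is ensuring the refinement implementation genuinely meets the partition (or at least covering) hypothesis, since that is an assumption about $\mathsf{Refine}$ rather than something proved: for domain splitting one must check $R = \bigcup_k R_k$ and that $\alpha$ is unchanged; for phase splitting on an unstable unit $\nu$ one must check that $\{\ell^{\mathrm{act}}_\nu\} \vee \{\ell^{\mathrm{inact}}_\nu\}$ is exhaustive for the exact ReLU semantics (every real value of $s_\nu$ is $\ge 0$ or $\le 0$), so that the two phase children cover the parent's exact set. Granting that the hypothesis holds, the argument itself is a one-line set-theoretic deduction; the content of the lemma lies entirely in making the "safely closed" bookkeeping match the exact semantics, which the node soundness invariant (Assumption~\ref{assm:sound}) already underwrites.

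\begin{proof}
By definition, a node is \emph{safely closed} when we have sound evidence that its exact feasible set is empty; by Lemma~\ref{lem:prop-contract} and the exactness-gate contract, any $\mathrm{PRUNE}$ outcome with an UNSAT/Farkas certificate at a node $(R',\alpha',C',W')$ indeed guarantees $\Exact(R',\alpha')=\emptyset$. Hence the hypothesis that all children are safely closed gives $\Exact(R_k,\alpha_k)=\emptyset$ for every $k\in K$. Since $K$ is finite,
\[
\bigcup_{k\in K}\Exact(R_k,\alpha_k)\;=\;\emptyset.
\]
By the partition hypothesis, $\Exact(R,\alpha)=\bigcup_{k\in K}\Exact(R_k,\alpha_k)$, so $\Exact(R,\alpha)=\emptyset$. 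Therefore the parent node is safely closed as well, and a closure certificate for the parent may be assembled from the child certificates together with the (contract-respecting) record of the refinement step, yielding the tree-structured proof artifact. Note that only the covering inclusion $\Exact(R,\alpha)\subseteq\bigcup_{k\in K}\Exact(R_k,\alpha_k)$ is used; disjointness of the $\Exact(R_k,\alpha_k)$ is not needed for soundness.
\end{proof}
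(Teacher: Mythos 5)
Your argument is essentially the paper's own proof: the paper closes this lemma with one line ("if every child has empty counterexample set (or satisfies safety), their union is empty (or safe), hence the parent is empty (or safe)"), and your expanded version — children closed $\Rightarrow$ each $\Exact(R_k,\alpha_k)=\emptyset$ $\Rightarrow$ the union is empty $\Rightarrow$ the parent's exact set is empty by the partition hypothesis — is the same set-theoretic deduction, with the correct additional observation that only the covering inclusion, not disjointness, is needed for the soundness direction.

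One small imprecision: your proof block identifies ``safely closed'' exclusively with $\Exact(R',\alpha')=\emptyset$, justified via the Farkas/UNSAT route of Lemma~\ref{lem:prop-contract}. But the paper also closes nodes by the Prune--Bound rule (Lemma~\ref{lem:prune-bound-sound}), where the exact feasible set may be \emph{nonempty} and one instead certifies $\Safety$ on all of it; for such a child your premise $\Exact(R_k,\alpha_k)=\emptyset$ is simply false. The paper's one-liner covers both modes with its parenthetical ``(or satisfies safety).'' The fix is immediate and you half-state it yourself in your informal discussion: run the identical union argument on the \emph{counterexample} sets $\Exact(R_k,\alpha_k)\cap\mathrm{Feas}(\NegP)$ (equivalently, on the obligations $\mathcal{O}(R_k,\alpha_k)$), which also cover the parent's counterexample set under the same partition hypothesis, and which are empty under either closure mode. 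With that substitution your proof matches the paper's in full.
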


\begin{proof}
Immediate from set partitioning: if every child has empty counterexample set (or satisfies safety),
their union is empty (or safe), hence the parent is empty (or safe).
\end{proof}

\noindent\textbf{Use.}
These interface lemmas are used to compose global soundness and justify branch--merge learning (Section~\ref{sec:merge}).

\section{A Proof Calculus for Certificate-Carrying Hybrid Verification}
\label{sec:calculus}

We formalize the certificate objects and local inference rules used by the hybrid verifier so that every pruning or learning step is justified by a finite, independently checkable artifact.

\subsection{Constraint language and variable discipline}
\label{subsec:language}

Fix a variable vector $v\in\R^N$ collecting all network and auxiliary variables
(e.g., $x=z^{(0)}$, pre-activations $s^{(i)}$, post-activations $z^{(i)}$, and any margin variables used in $\NegP$).
A \emph{linear store} is a finite set of inequalities and equalities over $\Q$:
\[
C \equiv \{A v \le b,\; A'v=b'\},
\]
which we normalize to a pure inequality form $A_C v \le b_C$ by rewriting equalities into two inequalities.
All certificates below are rational objects over $\Q$.

\begin{definition}[Exact semantics and relaxation semantics (node-local)]
\label{def:semantics}
For a node $(R,\alpha,C,W)$, let $\Exact(R,\alpha)$ be the exact feasible set under
the network semantics, region constraint $x\in R$, and phase commitments $\alpha$.
Let $\mathrm{Feas}(C)$ be the feasible set of the linear store.
The node soundness invariant is $\Exact(R,\alpha)\subseteq \mathrm{Feas}(C)$
(Assumption~\ref{assm:sound}).
\end{definition}

\subsection{Certificate objects (typed, checkable)}
\label{subsec:cert-objects}

A \emph{certificate} is a finite rational object that is accepted if a deterministic checker validates it \cite{paper1}
.
We specify certificates by their \emph{type} and their checker obligations.

\begin{definition}[Dual bound certificate]
\label{def:dual-cert}
Let $C$ be a linear store in inequality form $A v \le b$ and let $g\in\Q^N$.
A \emph{dual bound certificate} for the claim $g^\top v \le \beta$ over $C$ is a vector $\lambda\in\Q^{m}_{\ge 0}$
such that
\[
\lambda^\top A = g^\top
\qquad\text{and}\qquad
\lambda^\top b \le \beta.
\]
\end{definition}

\cite{schrijver1986}

\begin{lemma}[Dual certificate soundness]
\label{lem:dual-sound}
If $\lambda$ is a dual bound certificate for $g^\top v \le \beta$ over $A v\le b$,
then for all $v$ with $A v\le b$ we have $g^\top v \le \beta$.
\end{lemma}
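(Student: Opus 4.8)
The plan is to invoke the elementary weak-duality fact that a nonnegative combination of valid inequalities is again a valid inequality. First I would fix an arbitrary $v$ with $Av\le b$ and write $A$ row-wise as rows $a_1^\top,\dots,a_m^\top$ with right-hand sides $b_1,\dots,b_m$, so that each scalar inequality $a_i^\top v\le b_i$ holds. Since $\lambda\in\Q^m_{\ge 0}$, multiplying the $i$-th inequality by the nonnegative scalar $\lambda_i$ preserves its direction, giving $\lambda_i a_i^\top v\le \lambda_i b_i$ for every $i$.

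Next I would sum these $m$ inequalities to obtain $\sum_i \lambda_i a_i^\top v\le \sum_i \lambda_i b_i$, that is, $(\lambda^\top A)v\le \lambda^\top b$. At this point the two defining conditions of the certificate close the argument: the equality $\lambda^\top A=g^\top$ rewrites the left-hand side as $g^\top v$, and the bound $\lambda^\top b\le\beta$ controls the right-hand side, so $g^\top v\le \lambda^\top b\le\beta$. Since $v$ was an arbitrary element of $\mathrm{Feas}(\{Av\le b\})$, this establishes the claim.

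There is essentially no obstacle here; the single point that must be handled with care is the direction-preservation step, which is precisely where the hypothesis $\lambda\ge 0$ is used — a negative multiplier would flip the corresponding inequality and break the chain. I would also note briefly that equality rows of the original store cause no difficulty: after the normalization to pure inequality form in Section~\ref{subsec:language}, each equality contributes a pair of opposing inequalities, and assigning them independent nonnegative multipliers is equivalent to allowing a free-sign multiplier on the original equality, so the argument goes through verbatim. This lemma is the soundness backbone for every dual-bound and Farkas-infeasibility certificate used later, in particular the $\mathrm{PRUNE}$ branch of the propagation contract (Lemma~\ref{lem:prop-contract}).
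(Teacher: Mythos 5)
Your proof is correct and follows exactly the paper's argument: multiply the feasible system by the nonnegative multipliers, sum, and apply the two certificate conditions $\lambda^\top A=g^\top$ and $\lambda^\top b\le\beta$. The extra remarks about where $\lambda\ge 0$ is needed and how normalized equality rows behave are accurate but not required.
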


\begin{proof}
For any feasible $v$, multiply $A v\le b$ by $\lambda\ge 0$ to obtain
$\lambda^\top A v \le \lambda^\top b$. Using $\lambda^\top A=g^\top$ yields
$g^\top v\le \lambda^\top b\le \beta$.
\end{proof}

\begin{definition}[Farkas infeasibility certificate]
\label{def:farkas-cert}
Let $C$ be $A v \le b$. A \emph{Farkas certificate} for infeasibility of $C$
is a vector $\lambda\in\Q^{m}_{\ge 0}$ such that
\[
\lambda^\top A = 0^\top
\qquad\text{and}\qquad
\lambda^\top b < 0.
\]
\end{definition}
\cite{schrijver1986}

\begin{lemma}[Farkas certificate soundness]
\label{lem:farkas-sound}
If $\lambda$ is a Farkas certificate for $A v \le b$, then $\mathrm{Feas}(A v\le b)=\emptyset$.
\end{lemma}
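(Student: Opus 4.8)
The plan is to prove Lemma~\ref{lem:farkas-sound} by contradiction, using exactly the same multiply-and-sum manipulation that established Lemma~\ref{lem:dual-sound}. Suppose, for contradiction, that $\mathrm{Feas}(A v\le b)\neq\emptyset$, so that there exists some $v_0\in\R^N$ with $A v_0\le b$. Since $\lambda\in\Q^m_{\ge 0}$, multiplying the inequality $A v_0\le b$ componentwise by the nonnegative weights $\lambda$ and summing preserves the inequality direction, yielding $\lambda^\top A v_0\le \lambda^\top b$.

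The key step is then to substitute the two defining properties of the Farkas certificate from Definition~\ref{def:farkas-cert}. On the left-hand side, $\lambda^\top A = 0^\top$ gives $\lambda^\top A v_0 = 0$. On the right-hand side, $\lambda^\top b < 0$. Combining, we obtain $0 = \lambda^\top A v_0 \le \lambda^\top b < 0$, i.e.\ $0 < 0$, a contradiction. Hence no such $v_0$ exists and $\mathrm{Feas}(A v\le b)=\emptyset$, as claimed.

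There is essentially no obstacle here: the statement is one direction of Farkas' lemma (the ``easy'' direction, since the nontrivial content of Farkas' lemma is the \emph{existence} of such a $\lambda$ whenever the system is infeasible, which we are not asserting). The only points requiring minor care are that the weights are genuinely nonnegative (so the summation step is valid) and that the store $C$ has already been normalized to pure inequality form $A v\le b$ as stipulated in Section~\ref{subsec:language}, so that equalities do not need separate treatment. I would keep the proof to two or three lines, mirroring the proof of Lemma~\ref{lem:dual-sound}, and optionally remark that this is the standard certificate of infeasibility produced by LP solvers, which is why the propagation engine can emit it directly (cf.\ Lemma~\ref{lem:prop-contract}).
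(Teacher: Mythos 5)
Your proof is correct and follows exactly the same route as the paper's: assume a feasible point exists, multiply $A v\le b$ by the nonnegative multipliers $\lambda$, use $\lambda^\top A=0^\top$ to get $0\le\lambda^\top b<0$, contradiction. The additional remarks about normalization to inequality form and this being the easy direction of Farkas' lemma are accurate but not needed for the argument.
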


\begin{proof}
Assume $A v \le b$ holds for some $v$. Multiply by $\lambda\ge 0$ to get
$\lambda^\top A v \le \lambda^\top b$. Since $\lambda^\top A=0^\top$, the left side is $0$,
so $0\le \lambda^\top b<0$, contradiction.
\end{proof}

\noindent\textbf{Guarded certificates.}
Phase-conditional reasoning is made checkable by attaching certificates to explicit guards (phase literals) and verifying them against the guarded store.

\begin{definition}[Guarded certificate]
\label{def:guarded-cert}
Let $G$ be a set of guard literals and let $C(G)$ be the linear store obtained by adding the
guard consequences (phase implications) of all literals in $G$.
A \emph{guarded Farkas certificate} is a pair $(G,\lambda)$ such that $\lambda$ is a Farkas certificate
for infeasibility of $C(G)\cup C_0$, where $C_0$ is the unguarded part (e.g., $\Aff\cup\Dom(R)\cup\NegP\cup\Rel$).
\end{definition}

Guarded certificates are the algebraic substrate behind conflict cores and clause learning:
if a guarded infeasibility certificate exists for a guard set $G$, then at least one guard in $G$
must be false in any exact-feasible model.

\begin{lemma}[From guarded infeasibility to a valid conflict clause]
\label{lem:guarded-to-clause}
If $(G,\lambda)$ is a guarded Farkas certificate witnessing infeasibility of $C(G)\cup C_0$,
then the clause $\bigvee_{\ell\in G}\neg \ell$ is valid for the node: every exact-feasible model satisfies it.
\end{lemma}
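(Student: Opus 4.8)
The plan is to reason at the level of a single exact-feasible model and argue by contradiction. First I would fix an arbitrary $v\in\Exact(R,\alpha)$ and suppose, toward a contradiction, that every guard literal $\ell\in G$ is true in $v$. The first substantive step is then to record that the truth of a guard literal forces its linear consequence: by the guarded formulation of Section~\ref{subsec:store-decomp}, $\ell^{\mathrm{act}}_{\nu}$ true entails $z_{\nu}=s_{\nu}\wedge s_{\nu}\ge 0$, and $\ell^{\mathrm{inact}}_{\nu}$ true entails $z_{\nu}=0\wedge s_{\nu}\le 0$. Hence $v$ satisfies exactly the phase-implication constraints that are adjoined to build $C(G)$ from $C_0$, so $v\in\mathrm{Feas}(C(G))$.

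Next I would bring in the unguarded part. Since $C_0=\Aff\cup\Dom(R)\cup\NegP\cup\Rel\subseteq C$, the node soundness invariant (Assumption~\ref{assm:sound}) gives $\Exact(R,\alpha)\subseteq\mathrm{Feas}(C)\subseteq\mathrm{Feas}(C_0)$, hence $v\in\mathrm{Feas}(C_0)$. Combining the two inclusions, $v\in\mathrm{Feas}(C(G)\cup C_0)$.

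The final step is to apply Farkas soundness. By Definition~\ref{def:guarded-cert}, $\lambda$ is a Farkas certificate for infeasibility of the linear system $C(G)\cup C_0$ (normalized to inequality form), so Lemma~\ref{lem:farkas-sound} yields $\mathrm{Feas}(C(G)\cup C_0)=\emptyset$, contradicting $v\in\mathrm{Feas}(C(G)\cup C_0)$. The assumption therefore fails: at least one $\ell\in G$ is false in $v$, i.e.\ $v\models\bigvee_{\ell\in G}\neg\ell$. As $v$ was an arbitrary exact-feasible model of the node, the clause holds at every such model, which is the claim.

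The part that needs care — and the only place a full write-up should slow down — is the bridge between the Boolean guard layer and the linear store: one must state explicitly that $C(G)$ consists of \emph{precisely} the phase-implication consequences of the literals in $G$, so that ``all literals of $G$ true in $v$'' genuinely certifies $v\in\mathrm{Feas}(C(G))$, and that the learned clause is built over the same guard literals that the exactness gate used (no renaming between engines). Everything after that correspondence is a two-line chaining of the node invariant with Lemma~\ref{lem:farkas-sound}, with no combinatorial or arithmetic content to grind through.
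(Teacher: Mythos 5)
Your proposal is correct and follows essentially the same route as the paper's proof: assume all guards of $G$ hold in an exact-feasible model, conclude the model satisfies $C(G)\cup C_0$, and contradict the Farkas-certified infeasibility. The paper states this in three compressed sentences; your version merely makes explicit the two inclusions (guard truth $\Rightarrow$ phase implications, and node soundness $\Rightarrow$ satisfaction of $C_0$) that the paper leaves implicit, which is a reasonable expansion rather than a different argument.
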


\begin{proof}
If a model satisfied all guards in $G$, then it would satisfy all constraints in $C(G)\cup C_0$.
But infeasibility contradicts existence of such a model. Hence at least one guard must be false.
\end{proof}

\subsection{Checkers and non-speculative proof validity}
\label{subsec:checkers}

A certificate is accepted only if a checker validates the required rational identities/inequalities.
For dual certificates, the checker verifies:
(i) $\lambda\ge 0$,
(ii) $\lambda^\top A=g^\top$, and
(iii) $\lambda^\top b\le \beta$.
For Farkas certificates, it verifies:
(i) $\lambda\ge 0$,
(ii) $\lambda^\top A=0^\top$, and
(iii) $\lambda^\top b<0$.
No probabilistic or empirical assumption enters the correctness; validity is purely algebraic.

\begin{proposition}[Checker completeness for linear certificates]
\label{prop:checker-complete}
If a purported certificate satisfies the checker obligations for its type,
then the claimed entailment (dual bound) or infeasibility (Farkas) is correct.
\end{proposition}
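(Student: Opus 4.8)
The plan is to reduce the proposition to the two soundness lemmas already established — Lemma~\ref{lem:dual-sound} and Lemma~\ref{lem:farkas-sound} — by observing that the checker obligations catalogued in Section~\ref{subsec:checkers} are, type by type, verbatim the hypotheses of those lemmas. First I would fix notation using the normalization of Section~\ref{subsec:language}: the store $C$ is presented to the checker in pure inequality form $A_C v \le b_C$, where each original equality $a^\top v = c$ has been replaced by the pair $a^\top v \le c$, $-a^\top v \le -c$. Since this rewriting does not alter the solution set, $\mathrm{Feas}(C)$ is unchanged, so it suffices to reason about $A_C v \le b_C$ and then transfer conclusions back to $C$.

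Next I would split on the certificate type. For a purported \emph{dual bound certificate} $\lambda$ for the claim $g^\top v \le \beta$, the checker has verified (i) $\lambda \ge 0$, (ii) $\lambda^\top A_C = g^\top$, and (iii) $\lambda^\top b_C \le \beta$; these are exactly the conditions of Definition~\ref{def:dual-cert} together with nonnegativity, so Lemma~\ref{lem:dual-sound} applies directly and gives $g^\top v \le \beta$ for every $v$ with $A_C v \le b_C$, i.e.\ for every $v \in \mathrm{Feas}(C)$, establishing the claimed entailment. For a purported \emph{Farkas certificate} $\lambda$, the checker has verified (i) $\lambda \ge 0$, (ii) $\lambda^\top A_C = 0^\top$, and (iii) $\lambda^\top b_C < 0$, which are precisely the hypotheses of Definition~\ref{def:farkas-cert}; Lemma~\ref{lem:farkas-sound} then yields $\mathrm{Feas}(A_C v \le b_C) = \mathrm{Feas}(C) = \emptyset$, establishing the claimed infeasibility. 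Since these are the only two certificate types the proposition speaks of, the case analysis is exhaustive.

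Finally I would add a closing remark confirming non-speculativeness: in each case all three obligations are rational linear identities or (non-)strict rational inequalities over $\Q$, hence decidable exactly, so a deterministic checker accepts only on fully verified algebraic grounds, with no probabilistic or numerical slack. As for difficulty: there is essentially no obstacle — this is a packaging statement that re-exports Lemmas~\ref{lem:dual-sound}--\ref{lem:farkas-sound} under the checker interface. The single point deserving a sentence of care is the equality-normalization step, and even there the argument is merely that splitting an equality into two opposite inequalities preserves the feasible set, so the identities the checker verifies against $(A_C,b_C)$ transfer without loss back to the original store $C$.
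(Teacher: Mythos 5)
Your proof is correct and takes essentially the same route as the paper: the paper's own proof is a one-line reduction to Lemmas~\ref{lem:dual-sound} and~\ref{lem:farkas-sound} (plus a pointer to the guarded reduction of Lemma~\ref{lem:guarded-to-clause}), and your added care about equality normalization preserving $\mathrm{Feas}(C)$ is a harmless strengthening. The only omission relative to the paper is that you do not mention the guarded-certificate case, but since the proposition as stated only covers the dual-bound and Farkas types, your case analysis is exhaustive for the claim.
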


\begin{proof}
Immediate from Lemmas~\ref{lem:dual-sound} and~\ref{lem:farkas-sound} and the guarded reduction
in Lemma~\ref{lem:guarded-to-clause}.
\end{proof}

\subsection{Inference rules and proof-tree semantics}
\label{subsec:rules}

We view a verification run as constructing a \emph{proof tree} whose nodes are verification obligations.
Each node stores a region-phase pair $(R,\alpha)$ and a corresponding linear store $C$.
Leaves are closed by either:
(i) a validated counterexample (SAT witness), or
(ii) a certificate proving absence of counterexample in that node.

We write $\mathcal{O}(R,\alpha)$ for the exact counterexample obligation on a node:
\[
\mathcal{O}(R,\alpha) := \exists v \in \Exact(R,\alpha)\ \land\ \neg \Safety.
\]
A node is \emph{closed} if $\mathcal{O}(R,\alpha)$ is false.

\medskip
\noindent\textbf{(Prune--Bound).}
If there exists a dual bound certificate proving that a safety margin is nonnegative
over the relaxation store (hence over all exact-feasible points by soundness), then the node is closed.

\begin{lemma}[Prune--Bound is sound]
\label{lem:prune-bound-sound}
Assume $\Exact(R,\alpha)\subseteq \mathrm{Feas}(C)$.
Let $m(v)$ be a linear safety margin such that $m(v)\ge 0$ implies $\Safety$.
If there is a dual certificate proving $m(v)\ge 0$ for all $v\models C$, then the node has no counterexample.
\end{lemma}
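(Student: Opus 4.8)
The plan is to chain together three facts already available: the node soundness invariant $\Exact(R,\alpha)\subseteq\mathrm{Feas}(C)$ (Assumption~\ref{assm:sound}), the soundness of dual bound certificates (Lemma~\ref{lem:dual-sound}), and the definitional property of the margin that $m(v)\ge 0$ implies $\Safety(v)$. First I would unpack what the hypothesis actually provides: a dual certificate proving ``$m(v)\ge 0$ for all $v\models C$'' means, after writing $m(v)=g^\top v + c$ for suitable rational $g$ and constant $c$, that there is $\lambda\ge 0$ with $\lambda^\top A_C = -g^\top$ and $\lambda^\top b_C \le c$ (equivalently, a dual certificate for the claim $(-g)^\top v \le c$ over the store $A_C v\le b_C$). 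By Lemma~\ref{lem:dual-sound} this yields $-g^\top v \le c$, i.e.\ $m(v)=g^\top v + c \ge 0$, for every $v$ satisfying $C$.

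Next I would propagate this through the soundness invariant. Take any $v\in\Exact(R,\alpha)$. By Assumption~\ref{assm:sound}, $v\models C$, so by the previous step $m(v)\ge 0$, and therefore $\Safety(v)$ holds by the defining property of the margin. Hence every exact-feasible assignment at the node satisfies $\Safety$, which is precisely the statement that $\mathcal{O}(R,\alpha)=\exists v\in\Exact(R,\alpha).\ \neg\Safety$ is false; the node is closed, so it contains no counterexample.

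I would then close by remarking that no exact ReLU reasoning is invoked anywhere in this argument: the bound is proved purely over the linear relaxation store $C$, and soundness transfers ``for free'' because $C$ over-approximates the exact feasible set. This is the point of the (Prune--Bound) rule — it lets the verifier discharge a node using only LP-level dual evidence, with the certificate $\lambda$ serving as the independently checkable artifact (validated by the checker obligations of Section~\ref{subsec:checkers}).

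The main obstacle is essentially bookkeeping rather than mathematical depth: one must be careful about the affine offset in $m(v)$ and the sign convention, since Definition~\ref{def:dual-cert} is stated for a homogeneous functional $g^\top v\le\beta$ while a safety margin is typically affine, $m(v)=g^\top v+c$. The clean fix is either to fold the constant into $\beta$ (using $\beta=-c$ together with $g\mapsto -g$) or to carry an explicit constant-$1$ coordinate in $v$, as the variable discipline of Section~\ref{subsec:language} already permits via ``bookkeeping equalities.'' Either way, once the margin is expressed in the certificate's normal form, the rest is a one-line composition of the two cited lemmas with the invariant.
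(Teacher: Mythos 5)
Your proof is correct and follows exactly the paper's route: the dual certificate gives $m(v)\ge 0$ on all of $\mathrm{Feas}(C)$ via Lemma~\ref{lem:dual-sound}, the invariant $\Exact(R,\alpha)\subseteq\mathrm{Feas}(C)$ transfers the bound to every exact-feasible point, and the margin property then yields $\Safety$. Your extra care with the affine offset and sign normalization of $m(v)$ relative to Definition~\ref{def:dual-cert} is a detail the paper's one-line proof glosses over, but it does not change the argument.
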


\begin{proof}
Dual certificate implies $m(v)\ge 0$ for all $v\models C$.
By soundness, every exact-feasible point lies in $\mathrm{Feas}(C)$, hence satisfies $m(v)\ge 0$,
and therefore $\Safety$ holds.
\end{proof}

\medskip
\noindent\textbf{(Prune--Infeasible).}
If there exists a Farkas certificate for infeasibility of $C\wedge \NegP$ (or guarded version),
then the node is closed.

\begin{lemma}[Prune--Infeasible is sound]
\label{lem:prune-infeasible-sound}
Assume $\Exact(R,\alpha)\subseteq \mathrm{Feas}(C)$.
If $C\wedge \NegP$ is infeasible (with a Farkas certificate), then no counterexample exists in $(R,\alpha)$.
\end{lemma}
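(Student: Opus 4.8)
The plan is to mirror the structure of the proof of Lemma~\ref{lem:prune-bound-sound}, since Prune--Infeasible is the dual of Prune--Bound and relies on the same two ingredients: the node soundness invariant and the soundness of the underlying certificate object. Concretely, I would argue by contraposition against the existence of a counterexample. Suppose $\mathcal{O}(R,\alpha)$ were true, i.e.\ there exists $v\in\Exact(R,\alpha)$ with $\neg\Safety(z^{(L)})$ holding at $v$. The goal is to derive a contradiction with the assumed Farkas certificate.

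First I would invoke Assumption~\ref{assm:sound} (equivalently the hypothesis $\Exact(R,\alpha)\subseteq\mathrm{Feas}(C)$) to conclude that this $v$ satisfies every constraint in the relaxation store $C$. Second, I would observe that $\neg\Safety$ at $v$ means precisely that $v$ satisfies the linear constraints encoded by $\NegP$, since $\NegP$ is by construction the linear encoding of $\neg\Safety(z^{(L)})$. Combining these two facts, $v\models C\wedge\NegP$, so $\mathrm{Feas}(C\wedge\NegP)\neq\emptyset$. Third, I would appeal to Lemma~\ref{lem:farkas-sound}: the existence of a Farkas certificate $\lambda$ for $C\wedge\NegP$ (after normalizing equalities to the inequality form $Av\le b$ as in Section~\ref{subsec:language}) forces $\mathrm{Feas}(C\wedge\NegP)=\emptyset$, contradicting the nonemptiness just established. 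Hence no such $v$ exists and the node is closed.

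For the guarded version I would add one more sentence: if instead $(G,\lambda)$ is a guarded Farkas certificate for $C(G)\cup C_0$ with $\NegP\subseteq C_0$, then any exact-feasible counterexample $v$ satisfies the unguarded part $C_0$ (by soundness plus $\neg\Safety$) but need not satisfy the guard consequences; however, in the phase-splitting regime the guards in $G$ are exactly the phase commitments $\alpha$ recorded at the node, so $v\in\Exact(R,\alpha)$ already satisfies the guard implications $\ell^{\mathrm{act}}_\nu\Rightarrow(z_\nu=s_\nu\wedge s_\nu\ge 0)$ and $\ell^{\mathrm{inact}}_\nu\Rightarrow(z_\nu=0\wedge s_\nu\le 0)$. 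Thus $v\models C(G)\cup C_0$, again contradicting Lemma~\ref{lem:farkas-sound}; alternatively one can cite Lemma~\ref{lem:guarded-to-clause} to note that the learned clause $\bigvee_{\ell\in G}\neg\ell$ must hold, which is vacuously violated when $G\subseteq\alpha$.

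I do not expect any serious obstacle: the argument is a two-line composition of the node invariant with Farkas soundness, and the only point requiring mild care is making explicit that satisfying $\neg\Safety$ is the same as satisfying the linear store $\NegP$ (so that the putative counterexample actually lands in $\mathrm{Feas}(C\wedge\NegP)$), plus the bookkeeping that equality constraints have been rewritten into inequality form before the Farkas checker is applied. The guarded case is the only place where one must be slightly attentive, namely to match the guard set $G$ against the node's phase assignment $\alpha$ so that the exact-feasible point genuinely satisfies the guard consequences.
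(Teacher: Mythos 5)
Your proposal is correct and takes essentially the same route as the paper: assume a counterexample exists, use the node invariant to place it in $\mathrm{Feas}(C)$, note it satisfies $\NegP$ by construction, and contradict the Farkas certificate via Lemma~\ref{lem:farkas-sound}. The extra care you take with the equality-to-inequality normalization and the guarded case (matching $G$ against $\alpha$) is consistent with the paper's framework but not needed beyond what its two-line proof already establishes.
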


\begin{proof}
If a counterexample existed, it would yield an exact-feasible point satisfying $\NegP$.
By soundness it would also satisfy $C$, contradicting infeasibility of $C\wedge \NegP$.
\end{proof}

\medskip
\noindent\textbf{(Split).}
Replace obligation $(R,\alpha)$ by children $\{(R_k,\alpha_k)\}_{k\in K}$ such that
\[
\Exact(R,\alpha)=\bigcup_{k\in K}\Exact(R_k,\alpha_k)
\quad\text{and}\quad
\Exact(R_i,\alpha_i)\cap \Exact(R_j,\alpha_j)=\emptyset\ (i\neq j).
\]

\begin{lemma}[Split is sound and complete as a proof rule]
\label{lem:split-sound}
If $\Exact(R,\alpha)$ is partitioned by the children as above, then $(R,\alpha)$ has a counterexample
iff at least one child has a counterexample. Hence the parent is closed iff all children are closed.
\end{lemma}

\begin{proof}
Immediate from disjoint union of exact-feasible sets.
\end{proof}

\medskip
\noindent\textbf{(Merge) --- new rule.}
Combine branch-local certified bounds into a single parent lemma (Section~\ref{sec:merge}).
The calculus-level point is that merging is itself an inference rule:
from certificates on children, we derive a globally valid lemma to inject into $\Learn$.

\begin{remark}[Why Merge is not ``just learning'']
In this calculus, learning is justified only when it corresponds to a sound inference rule.
Merge is explicitly such a rule: it transforms checkable dual certificates on children into a
checkable lemma valid on the parent. This elevates lemma sharing from a heuristic to a proof step.
\end{remark}

\subsection{Certificate lifting across partitions (new technical hook)}
\label{subsec:lifting}

The following lemma explains why branch-local certificates can be treated as components of a global proof:
they lift along the Split rule, enabling compositional verification.

\begin{lemma}[Certificate lifting along region partitions]
\label{lem:lifting}
Let $R=R_1\cup R_2$ be a disjoint partition and assume node soundness holds on each child.
If each child has either (i) a Prune--Bound certificate proving safety on $R_i$, or (ii) a Prune--Infeasible
certificate proving absence of counterexamples on $R_i$, then the parent region $R$ is safe.
\end{lemma}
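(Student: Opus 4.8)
The plan is to reduce the statement to two facts already proved: the node-level soundness of the leaf-closing rules (Lemmas~\ref{lem:prune-bound-sound} and~\ref{lem:prune-infeasible-sound}) and the soundness of \textbf{(Split)} (Lemma~\ref{lem:split-sound}). No new algebra is required; the only thing to verify is that a disjoint \emph{region} partition induces a disjoint partition of the exact feasible sets, after which the result is a one-line composition.

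First I would close each child. Fix $i\in\{1,2\}$ and let $\alpha$ be the partial phase assignment inherited from the parent. By hypothesis the node invariant $\Exact(R_i,\alpha)\subseteq\mathrm{Feas}(C_i)$ holds at child $i$, which is exactly the premise required by the two prune lemmas. If child $i$ carries a Prune--Bound certificate, instantiate the generic margin $m(v)$ of Lemma~\ref{lem:prune-bound-sound} with the linear safety margin of $\Safety$; the lemma then gives $\Safety$ at every $v\in\Exact(R_i,\alpha)$, i.e.\ $\mathcal{O}(R_i,\alpha)$ is false. If instead child $i$ carries a Prune--Infeasible (Farkas) certificate for $C_i\wedge\NegP$, Lemma~\ref{lem:prune-infeasible-sound} yields the same conclusion. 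Either way, $\mathcal{O}(R_i,\alpha)$ is closed for $i=1,2$.

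Next I would glue. Since $R=R_1\cup R_2$ with $R_1\cap R_2=\emptyset$, and $\Exact(R,\alpha)$ is cut out by the fixed affine/network equations, the phase commitments in $\alpha$, and only the region block $x\in R$, a point $v$ belongs to $\Exact(R,\alpha)$ iff it belongs to exactly one of $\Exact(R_1,\alpha)$, $\Exact(R_2,\alpha)$; hence $\Exact(R,\alpha)=\Exact(R_1,\alpha)\cup\Exact(R_2,\alpha)$ disjointly. This is precisely the hypothesis of \textbf{(Split)}, so Lemma~\ref{lem:split-sound} gives that $\mathcal{O}(R,\alpha)$ is false iff both child obligations are false. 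By the previous paragraph both are false, hence $\mathcal{O}(R,\alpha)$ is false, which unfolds to: $\Safety(z^{(L)})$ holds for every $v\in\Exact(R,\alpha)$, i.e.\ $R$ is safe. Iterating extends the argument verbatim to any finite disjoint cover $R=\bigcup_{k\in K}R_k$.

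I do not expect a genuine obstacle; the single point deserving care is the disjointness claim in the gluing step. The covering half ($R=R_1\cup R_2$ implies $\Exact(R,\alpha)=\Exact(R_1,\alpha)\cup\Exact(R_2,\alpha)$) is what soundness actually needs and is automatic, whereas the disjointness half is the part an implementation could in principle spoil (e.g.\ by assigning a shared boundary face to both children). Since the lemma assumes a disjoint partition, this is a hypothesis rather than a burden, but it is the place where one must be explicit that the children differ only in their region block while sharing the affine, relaxation, and guard structure.
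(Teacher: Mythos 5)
Your proposal is correct and follows essentially the same route as the paper's proof: close each child with the appropriate prune lemma (Lemma~\ref{lem:prune-bound-sound} or~\ref{lem:prune-infeasible-sound}) and then glue via the Split rule (Lemma~\ref{lem:split-sound}), using the fact that the disjoint region partition induces a disjoint partition of the exact feasible sets. Your extra care in spelling out why the exact sets partition (the children differing only in their region block) is a worthwhile explicit version of a step the paper leaves implicit, but it is not a different argument.
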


\begin{proof}
By Lemma~\ref{lem:split-sound}, any parent counterexample would belong to exactly one child.
But each child is closed by its certificate, so no counterexample exists in the union.
\end{proof}

\section{Propagation at a Node: Incremental LP Tightening}
\label{sec:prop}

Propagation strengthens a node store $C$ using sound linear reasoning until it either prunes the node by a certificate or returns a fixed point with tightened bounds and fewer unstable ReLUs (feeding the exactness gate in Section~\ref{sec:gate}).

\subsection{Preliminaries: local variables, bounds, and targets}
\label{subsec:prop-prelims}

Fix a node $(R,\alpha,C,W)$. The store $C$ is a set of rational linear constraints over $v$.
For each ReLU unit $\nu$ with pre-activation variable $s_{\nu}$ and post-activation variable $z_{\nu}$,
propagation maintains current bounds
\[
l_{\nu}\ \le\ s_{\nu}\ \le\ u_{\nu},
\qquad
0\ \le\ z_{\nu}\ \le\ \max\{0,u_{\nu}\},
\]
as well as any derived bounds on safety margins (linear forms extracted from $\NegP$).
We write $U$ for the set of unstable units with $l_{\nu}<0<u_{\nu}$ after the current propagation stage.

\noindent\textbf{Propagation targets..}
Propagation does not attempt to optimize all variables. Instead, it focuses on a finite set of \emph{templates}
(a new technique formalized below) that are provably sufficient to:
(i) stabilize units, and
(ii) certify safety margins when possible.

\subsection{Key propagation operations (with certificate semantics)}
\label{subsec:prop-ops}

Each propagation operation is required to preserve the node soundness invariant
(Assumption~\ref{assm:sound}) and to produce checkable evidence whenever it prunes or learns.

\begin{enumerate}[leftmargin=2.2em]

\item \textbf{Hull insertion (structured relaxation).}
For each unstable ReLU $\nu$ with bounds $l_{\nu}<0<u_{\nu}$, we add the standard convex outer envelope:
\begin{equation}
\label{eq:hull}
z_{\nu}\ \ge\ 0,\qquad
z_{\nu}\ \ge\ s_{\nu},\qquad
z_{\nu}\ \le\ \frac{u_{\nu}}{u_{\nu}-l_{\nu}}(s_{\nu}-l_{\nu}),
\qquad
z_{\nu}\ \le\ u_{\nu}.
\end{equation}
This is a sound relaxation of the exact graph $z_{\nu}=\max(0,s_{\nu})$ restricted to $s_{\nu}\in[l_{\nu},u_{\nu}]$.

\item \textbf{Certified bound tightening (CBT).}
For selected scalar linear forms $g^\top v$ (templates), we solve LP subproblems:
\[
\beta^{\max} := \max\{g^\top v : v \models C\},\qquad
\beta^{\min} := \min\{g^\top v : v \models C\}.
\]
We then add derived bounds $g^\top v \le \beta^{\max}$ or $-g^\top v \le -\beta^{\min}$ to $C$,
\emph{together with dual certificates} (Definition~\ref{def:dual-cert}). This converts numerical optimization output
into checkable proof objects.

\item \textbf{Stabilization (phase elimination with certificates).}
If tightening proves $l_{\nu}\ge 0$ then the unit is \emph{active} on the node and we replace
the relaxation by the linear specialization
\[
z_{\nu} = s_{\nu},\qquad s_{\nu}\ge 0.
\]
If tightening proves $u_{\nu}\le 0$ then the unit is \emph{inactive} and we replace by
\[
z_{\nu} = 0,\qquad s_{\nu}\le 0.
\]
In both cases, we store the proof obligation as a \emph{stability certificate} derived from dual bounds
on $s_{\nu}$.

\item \textbf{Lemma injection (global monotone strengthening).}
Certificate-justified inequalities are added to the node store $C$; those that are globally valid are also
added to $\Learn$, preserving Lemma~\ref{lem:learn-monotone}.
Branch--merge lemmas (Section~\ref{sec:merge}) are injected only when derived from certificates on children.

\item \textbf{Feasibility test (proof-producing pruning).}
If the updated store is infeasible, we extract a Farkas certificate
(Definition~\ref{def:farkas-cert}) and close the node by the Prune--Infeasible rule
(Lemma~\ref{lem:prune-infeasible-sound}).
\end{enumerate}

\subsection{New technique: Template-Guided Certified Tightening (TGCT)}
\label{subsec:tgct}

We introduce \emph{Template-Guided Certified Tightening} (TGCT): a finite template family that determines which LP subproblems are solved and yields certificate-producing bounds with a finite saturation guarantee.

\begin{definition}[Template set]
\label{def:templates}
A \emph{template} is a rational vector $g\in\Q^N$ defining a linear form $g^\top v$.
A finite set $\mathcal{G}=\{g_1,\dots,g_T\}$ is called a \emph{propagation template set}.
\end{definition}

\begin{definition}[TGCT-closure]
\label{def:tgct-closure}
Given a store $C$ and templates $\mathcal{G}$, define $\mathrm{TGCT}_{\mathcal{G}}(C)$ as the store obtained by:
for each $g_t\in\mathcal{G}$, adding the certified optimal upper bound constraint
$g_t^\top v \le \beta^{\max}_t$ and the certified optimal lower bound constraint
$-g_t^\top v \le -\beta^{\min}_t$, where $\beta^{\max}_t,\beta^{\min}_t$ are obtained by LP solves over $C$.
\end{definition}

TGCT is not a heuristic: it is a deterministic closure operator parameterized by $\mathcal{G}$, and it yields a
certificate log for each added inequality.

\begin{lemma}[TGCT soundness and monotonicity]
\label{lem:tgct-sound}
For any store $C$ and finite template set $\mathcal{G}$, we have:
\[
\mathrm{Feas}\bigl(\mathrm{TGCT}_{\mathcal{G}}(C)\bigr)\ \subseteq\ \mathrm{Feas}(C).
\]
Moreover, if $C$ satisfies the node invariant, then so does $\mathrm{TGCT}_{\mathcal{G}}(C)$.
\end{lemma}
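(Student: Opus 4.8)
The plan is to establish both assertions by tracking what each added constraint does to the feasible set. First I would prove the inclusion $\mathrm{Feas}(\mathrm{TGCT}_{\mathcal{G}}(C))\subseteq\mathrm{Feas}(C)$: the closure operator of Definition~\ref{def:tgct-closure} only \emph{adds} inequalities to $C$, never removes any, so any $v$ satisfying the enlarged store $a$ fortiori satisfies the original store. This is the trivial half and needs only one sentence.

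Next I would address soundness of the newly added bounds, i.e.\ that the constraint $g_t^\top v\le\beta^{\max}_t$ (and symmetrically $-g_t^\top v\le-\beta^{\min}_t$) is valid for all of $\mathrm{Feas}(C)$, not merely consistent with it. Here $\beta^{\max}_t=\max\{g_t^\top v: v\models C\}$ by definition, so for every $v\models C$ we have $g_t^\top v\le\beta^{\max}_t$ by the very definition of the maximum; thus the added constraint is satisfied by every point already in $\mathrm{Feas}(C)$, which re-confirms the inclusion and also shows no feasible point of the \emph{exact} set is lost. The same argument applies to the lower bound. I would note in passing that each such bound comes with a dual certificate (Definition~\ref{def:dual-cert}), matching Lemma~\ref{lem:dual-sound}, so the tightening is certificate-producing, though that is not needed for the set inclusion itself.

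For the node-invariant preservation, I would invoke Assumption~\ref{assm:sound}: by hypothesis $\Exact(R,\alpha)\subseteq\mathrm{Feas}(C)$. Combining this with the previous step, every $v\in\Exact(R,\alpha)$ lies in $\mathrm{Feas}(C)$ and hence satisfies each added constraint $g_t^\top v\le\beta^{\max}_t$ and $-g_t^\top v\le-\beta^{\min}_t$ (since those hold on all of $\mathrm{Feas}(C)$). Therefore $v\in\mathrm{Feas}(\mathrm{TGCT}_{\mathcal{G}}(C))$, which is exactly the node invariant for the closed store. This mirrors the reasoning already used for the propagation contract (Lemma~\ref{lem:prop-contract}) and learning monotonicity (Lemma~\ref{lem:learn-monotone}): adding valid consequences can only shrink the relaxation while keeping the exact set inside it.

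I do not expect a genuine obstacle here — the statement is essentially definitional, since TGCT adds exactly the optimal-value cuts over $C$. The one point requiring mild care is ensuring the LP solves in Definition~\ref{def:tgct-closure} are well-posed: if some $\beta^{\max}_t=+\infty$ (the form is unbounded above over $C$) or $\mathrm{Feas}(C)=\emptyset$, the corresponding constraint should be treated as vacuous (or, in the infeasible case, the node is already closed by a Farkas certificate). I would dispatch this with a short remark that in the unbounded direction no constraint is added (equivalently, one adds $0\le 0$), which leaves both the inclusion and the invariant trivially intact, so no generality is lost.
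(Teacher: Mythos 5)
Your proposal is correct and follows essentially the same route as the paper's proof: the paper justifies validity of each added inequality via its dual certificate (Lemma~\ref{lem:dual-sound}) whereas you appeal directly to the definition of the LP optimum, but both arguments reduce to the same fact that every added constraint holds on all of $\mathrm{Feas}(C)$, so feasibility can only shrink while $\Exact(R,\alpha)\subseteq\mathrm{Feas}(C)$ is preserved. Your closing remark on the unbounded and infeasible LP cases is a sensible addition that the paper's one-line proof leaves implicit.
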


\begin{proof}
Each added inequality is certified by a dual certificate, hence valid for all $v\models C$
(Lemma~\ref{lem:dual-sound}). Adding valid inequalities can only shrink feasibility and preserves soundness.
\end{proof}

\noindent\textbf{Recommended template families (well-established hooks)..}
A minimal but effective choice of templates consistent with later modules is:
\begin{enumerate}[leftmargin=2.2em]
\item \emph{Neuron bounds:} $g = e_{s_{\nu}}$ and $g=-e_{s_{\nu}}$ for pre-activation bounds of unstable units.
\item \emph{Output margins:} templates extracted from $\NegP$, e.g.\ linear safety margins $m(v)$.
\item \emph{Coupled templates:} sparse combinations reflecting affine dependencies,
e.g.\ templates for selected intermediate linear forms $w^\top z^{(i)}$.
\end{enumerate}
The novelty here is not the existence of such templates, but that TGCT makes their usage
a closed, certificate-producing rule that composes with MergeLearn and the exactness gate.

\subsection{Stabilization is a certified elimination rule (new result)}
\label{subsec:stabilization-correct}

Stabilization eliminates phase choices when a ReLU is provably active or inactive; the next lemma shows this is a certified elimination rule.

\begin{lemma}[Certified stabilization]
\label{lem:stabilize}
Let $\nu$ be a ReLU unit with pre-activation $s_{\nu}$ and post-activation $z_{\nu}$.
Assume the store $C$ entails $s_{\nu}\ge 0$ (respectively $s_{\nu}\le 0$), justified by a dual certificate.
Then replacing the relaxation constraints for $\nu$ by the exact linear specialization
$z_{\nu}=s_{\nu}$ and $s_{\nu}\ge 0$ (respectively $z_{\nu}=0$ and $s_{\nu}\le 0$)
preserves node soundness.
\end{lemma}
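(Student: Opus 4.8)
The plan is to check directly that the replacement preserves the node soundness invariant $\Exact(R,\alpha)\subseteq\mathrm{Feas}(C')$, where $C'$ is the store obtained from $C$ by deleting the convex-hull constraints \eqref{eq:hull} for $\nu$ and inserting the exact linear specialization. I will write out the active case ($C$ entails $s_\nu\ge 0$, specialization $z_\nu=s_\nu$ and $s_\nu\ge 0$); the inactive case is symmetric via $\max(0,s_\nu)=0$.

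First I would split $C'$ into the part inherited from $C$ --- namely everything except the hull block for $\nu$ --- and the two newly inserted constraints. For the inherited part, soundness is immediate: by hypothesis $\Exact(R,\alpha)\subseteq\mathrm{Feas}(C)$, and erasing constraints only enlarges the feasible set, so $\Exact(R,\alpha)$ is contained in the feasible set of the inherited part.

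Second I would justify the two inserted constraints. Since $C$ entails $s_\nu\ge 0$ with a dual bound certificate, Lemma~\ref{lem:dual-sound} gives $s_\nu\ge 0$ for every $v\models C$; composing with the entry invariant, every $v\in\Exact(R,\alpha)$ satisfies $s_\nu\ge 0$. But by definition $\Exact(R,\alpha)$ enforces the exact ReLU graph $z_\nu=\max(0,s_\nu)$, and on $s_\nu\ge 0$ this collapses to $z_\nu=s_\nu$. Hence every exact-feasible assignment of the node satisfies both inserted constraints, and intersecting with the inherited part yields $\Exact(R,\alpha)\subseteq\mathrm{Feas}(C')$. The inactive case is identical, using that $s_\nu\le 0$ forces $\max(0,s_\nu)=0$ and hence $z_\nu=0$.

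I do not anticipate a genuine obstacle: the substance is simply that a certified sign constraint on the pre-activation restricts the exact-feasible set to one affine piece of the ReLU, on which the envelope \eqref{eq:hull} can safely be traded for the exact equality. The only points requiring care are bookkeeping ones --- making precise what ``replacing the relaxation constraints for $\nu$'' deletes, and noting that this deletion is harmless because it is compensated by equalities valid on all exact points (indeed one could equivalently keep all of $C$ and merely add the specialization, which would only tighten $\mathrm{Feas}(C)$) --- together with the observation that the argument never uses any structure of $\alpha$ beyond the definition of $\Exact(R,\alpha)$, so the statement is meaningful precisely when $\nu$ is unstable and is otherwise vacuous.
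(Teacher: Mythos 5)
Your proposal is correct and follows essentially the same route as the paper's proof: use the certified entailment $s_\nu\ge 0$ together with the node invariant to conclude that every exact-feasible point satisfies $s_\nu\ge 0$, whence the exact ReLU semantics collapses to $z_\nu=s_\nu$ and the specialization is valid on all of $\Exact(R,\alpha)$. Your additional bookkeeping --- that deleting the hull constraints for $\nu$ can only enlarge the feasible set and is therefore harmless --- is left implicit in the paper's one-line argument, and your version also states the quantifier direction (exact-feasible points are models of $C$, hence inherit the certified sign bound) more carefully than the paper's parenthetical does.
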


\begin{proof}
If $C$ entails $s_{\nu}\ge 0$, then for every exact-feasible assignment (hence every model of $C$),
the exact ReLU semantics forces $z_{\nu}=s_{\nu}$. Thus the specialization holds for all exact-feasible points.
The $s_{\nu}\le 0$ case is analogous.
\end{proof}

\subsection{Propagation as a closure operator: fixed points and termination}
\label{subsec:closure}

We can now formalize propagation as repeated application of sound, monotone operators:
Hull insertion (which depends on bounds), TGCT tightening, stabilization, and lemma injection.

\begin{definition}[Propagation operator]
\label{def:prop-operator}
Fix a template set $\mathcal{G}$. Define the one-step propagation operator $\mathsf{P}_{\mathcal{G}}$ by:
\[
\mathsf{P}_{\mathcal{G}}(C)\ :=\ \mathsf{Stabilize}\bigl(\mathrm{TGCT}_{\mathcal{G}}(\mathsf{Hull}(C))\bigr)\ \cup\ \Learn,
\]
where $\mathsf{Hull}$ adds convex envelopes for currently unstable units,
$\mathrm{TGCT}_{\mathcal{G}}$ adds certified template bounds,
and $\mathsf{Stabilize}$ replaces any newly stable units by linear specializations.
\end{definition}

\begin{lemma}[Monotonic strengthening under propagation]
\label{lem:mono-prop}
For any $C$, $\mathrm{Feas}(\mathsf{P}_{\mathcal{G}}(C))\subseteq \mathrm{Feas}(C)$.
If the node invariant holds for $C$, it holds for $\mathsf{P}_{\mathcal{G}}(C)$.
\end{lemma}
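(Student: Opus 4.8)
The plan is to decompose $\mathsf{P}_{\mathcal{G}}$ into its three constituent operators---$\mathsf{Hull}$, $\mathrm{TGCT}_{\mathcal{G}}$, $\mathsf{Stabilize}$---and the final union with $\Learn$, then show that each link in this chain is individually sound and feasibility-non-increasing, so that their composition inherits both properties. Concretely, I would first argue feasibility monotonicity: $\mathsf{Hull}$ only adds the convex envelope inequalities~\eqref{eq:hull}, so $\mathrm{Feas}(\mathsf{Hull}(C))\subseteq\mathrm{Feas}(C)$; then Lemma~\ref{lem:tgct-sound} gives $\mathrm{Feas}(\mathrm{TGCT}_{\mathcal{G}}(\mathsf{Hull}(C)))\subseteq\mathrm{Feas}(\mathsf{Hull}(C))$; then $\mathsf{Stabilize}$ replaces a pair of relaxation inequalities by the exact specialization $z_\nu=s_\nu,\,s_\nu\ge 0$ (or $z_\nu=0,\,s_\nu\le 0$), which is logically at least as strong on the relevant node---this is exactly the content of Lemma~\ref{lem:stabilize}; finally, $\cup\,\Learn$ adds globally valid lemmas, which again only removes models. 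Composing these inclusions yields $\mathrm{Feas}(\mathsf{P}_{\mathcal{G}}(C))\subseteq\mathrm{Feas}(C)$.

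Second, I would establish preservation of the node invariant $\Exact(R,\alpha)\subseteq\mathrm{Feas}(\cdot)$. Assuming it holds for $C$: $\mathsf{Hull}$ adds only sound outer relaxations of $z_\nu=\max(0,s_\nu)$ on $s_\nu\in[l_\nu,u_\nu]$ (as noted right after~\eqref{eq:hull}), so every exact-feasible assignment still satisfies the enlarged store; $\mathrm{TGCT}_{\mathcal{G}}$ preserves the invariant by the second clause of Lemma~\ref{lem:tgct-sound}; $\mathsf{Stabilize}$ preserves it by Lemma~\ref{lem:stabilize} (the dual certificate entailing $s_\nu\ge 0$ or $s_\nu\le 0$ is what licenses replacing the relaxation by the exact linear form, and every exact-feasible point satisfies that exact form); and injecting a globally valid lemma preserves it by Lemma~\ref{lem:learn-monotone}. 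Chaining these four preservation facts gives the invariant for $\mathsf{P}_{\mathcal{G}}(C)$.

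The one subtlety---and the step I expect to need the most care---is the status of $\mathsf{Stabilize}$ in the \emph{feasibility-monotonicity} claim, because stabilization replaces constraints rather than purely adding them, so the naive "adding valid inequalities shrinks $\mathrm{Feas}$" argument does not literally apply. The resolution is that the hull constraints $z_\nu\ge s_\nu$ and $z_\nu\le \tfrac{u_\nu}{u_\nu-l_\nu}(s_\nu-l_\nu)$ together with the entailed bound $s_\nu\ge 0$ (respectively $s_\nu\le 0$) already force $z_\nu=s_\nu$ (respectively $z_\nu=0$) on every model of the pre-stabilization store, so the exact specialization does not discard any point that survived $\mathrm{TGCT}_{\mathcal{G}}$; hence the replacement is feasibility-non-increasing in fact feasibility-\emph{preserving} on the relevant slice. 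I would make this explicit with a one-line computation rather than treating $\mathsf{Stabilize}$ as a black box. Everything else is an assembly of the already-proved Lemmas~\ref{lem:dual-sound}, \ref{lem:learn-monotone}, \ref{lem:tgct-sound}, and~\ref{lem:stabilize}, so the proof is short once this point is dispatched.
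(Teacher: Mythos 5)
Your overall decomposition is exactly the paper's: chain $\mathsf{Hull}$, $\mathrm{TGCT}_{\mathcal{G}}$, $\mathsf{Stabilize}$, and the final union with $\Learn$, citing Lemmas~\ref{lem:tgct-sound}, \ref{lem:stabilize}, and~\ref{lem:learn-monotone} for the individual links; and you correctly identify the one step the paper glosses over, namely that $\mathsf{Stabilize}$ \emph{replaces} constraints, so the slogan ``adding valid inequalities shrinks $\mathrm{Feas}$'' does not literally cover it. However, the resolution you propose for that step is factually wrong. The pre-stabilization store does \emph{not} force $z_\nu=s_\nu$ once $s_\nu\ge 0$ is entailed: the hull upper bound in~\eqref{eq:hull} evaluated at $s_\nu=0$ equals $\frac{u_\nu}{u_\nu-l_\nu}(-l_\nu)=\frac{-u_\nu l_\nu}{u_\nu-l_\nu}>0$ (recall $l_\nu<0<u_\nu$ when the envelope was inserted), so a point with $\hat s_\nu=0$ and $\hat z_\nu=\tfrac{-u_\nu l_\nu}{2(u_\nu-l_\nu)}$ satisfies all of~\eqref{eq:hull} together with $s_\nu\ge 0$ yet violates $z_\nu=s_\nu$. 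Stabilization is therefore genuinely feasibility-\emph{shrinking}, not feasibility-preserving, and your ``does not discard any point'' claim fails; moreover, even if it were true it would establish the inclusion $\supseteq$, which is not the one the lemma asserts.

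The entailment you actually need runs in the opposite direction, and it does hold: to get $\mathrm{Feas}(\mathsf{Stabilize}(C'))\subseteq\mathrm{Feas}(C')$ when hull inequalities are deleted and replaced, you must check that the \emph{new} constraints entail the \emph{removed} ones, so that the deletion gains no models. This is a one-line computation: from $z_\nu=s_\nu$, $s_\nu\ge 0$, and the retained bound $s_\nu\le u_\nu$ one recovers $z_\nu\ge 0$, $z_\nu\ge s_\nu$, $z_\nu\le u_\nu$, and $z_\nu\le \frac{u_\nu}{u_\nu-l_\nu}(s_\nu-l_\nu)$ (the last is equivalent to $s_\nu l_\nu\ge u_\nu l_\nu$, i.e.\ $s_\nu\le u_\nu$ after dividing by $l_\nu<0$); the inactive case is analogous using $s_\nu\ge l_\nu$. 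With that substitution your argument is correct and coincides with the paper's proof, which simply invokes Lemma~\ref{lem:stabilize} and asserts that feasibility can only shrink without addressing the replacement issue at all --- so your instinct to scrutinize this step was sound even though the fix you wrote down points the wrong way.
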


\begin{proof}
Hull constraints are outer relaxations valid on the bounded interval; TGCT adds certified valid inequalities;
stabilization is sound by Lemma~\ref{lem:stabilize}; and $\Learn$ is globally valid (Lemma~\ref{lem:learn-monotone}).
Thus feasibility can only shrink, preserving soundness.
\end{proof}

\noindent\textbf{Termination (non-speculative)..}
Propagation terminates in practice because bounds stabilize and no new stable units appear.
We provide a formal termination guarantee for the TGCT part: since $\mathcal{G}$ is finite,
TGCT adds at most $2|\mathcal{G}|$ distinct certified inequalities (one upper and one lower per template),
after which further TGCT steps do not add new constraints unless the underlying store changes via stabilization.

\begin{proposition}[Finite TGCT saturation bound]
\label{prop:tgct-bound}
Fix a finite template set $\mathcal{G}$. Along any execution of \textsc{PropagateNode}, the number of
new TGCT inequalities added is at most $2|\mathcal{G}|$ per distinct store state (up to logical equivalence).
\end{proposition}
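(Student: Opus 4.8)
The plan is to argue that a single application of $\mathrm{TGCT}_{\mathcal{G}}$ to a fixed store $C$ produces at most $2|\mathcal{G}|$ new inequalities, and then that re-application to the same store state adds nothing further, so that new TGCT inequalities can only appear when the store changes structurally (via stabilization). First I would unpack Definition~\ref{def:tgct-closure}: for each template $g_t\in\mathcal{G}$, $\mathrm{TGCT}_{\mathcal{G}}(C)$ adds exactly two constraints, namely $g_t^\top v\le \beta^{\max}_t$ and $-g_t^\top v\le -\beta^{\min}_t$, where $\beta^{\max}_t=\max\{g_t^\top v: v\models C\}$ and $\beta^{\min}_t=\min\{g_t^\top v: v\models C\}$. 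Since $|\mathcal{G}|=T$, this is at most $2T=2|\mathcal{G}|$ inequalities in total; this bounds the output of one TGCT step regardless of the contents of $C$.

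The second step is the idempotence-up-to-equivalence claim: applying $\mathrm{TGCT}_{\mathcal{G}}$ a second time to the enlarged store $C' := \mathrm{TGCT}_{\mathcal{G}}(C)$ yields no genuinely new constraints. The key observation is that $\mathrm{Feas}(C')\subseteq\mathrm{Feas}(C)$ by Lemma~\ref{lem:tgct-sound}, but also $\mathrm{Feas}(C')$ already satisfies $g_t^\top v\le\beta^{\max}_t$ and $g_t^\top v\ge\beta^{\min}_t$ for every $t$ by construction; hence $\max\{g_t^\top v: v\models C'\}\le\beta^{\max}_t$ and $\min\{g_t^\top v: v\models C'\}\ge\beta^{\min}_t$. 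The new bounds computed on $C'$ are therefore no weaker than the constraints already present, so each purported ``new'' inequality is entailed by $C'$ (it is either literally the old one, or a tightening that is implied — in which case it is logically redundant and does not change the store up to logical equivalence). So the store state is a fixed point of $\mathrm{TGCT}_{\mathcal{G}}$ in the sense required by the statement.

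The third step connects this to an execution of \textsc{PropagateNode}. Between TGCT invocations the only operations that alter the store are $\mathsf{Hull}$ (triggered by a change in the unstable set), $\mathsf{Stabilize}$ (which eliminates a unit's relaxation and replaces it by a linear specialization), and lemma injection. Each of these produces a \emph{distinct store state}; and within a run of \textsc{PropagateNode} on a fixed node there are only finitely many such changes, since the unstable set $U$ only shrinks under stabilization and $\Learn$ grows monotonically with globally valid lemmas (Lemma~\ref{lem:learn-monotone}). Thus I would phrase the conclusion as: per distinct store state (up to logical equivalence), TGCT contributes at most $2|\mathcal{G}|$ new certified inequalities, which is exactly the claimed bound.

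The main obstacle is making precise the phrase ``up to logical equivalence'': one must be careful that a second TGCT pass on $C'$ does not compute a \emph{strictly tighter} bound $\beta^{\max}_t$ than the first pass, because then one could in principle iterate and add a new, non-redundant inequality each time. The resolution — and the crux of the argument — is precisely the monotonicity inequality $\max\{g_t^\top v: v\models C'\}\le\beta^{\max}_t$ combined with the fact that $C'$ already contains $g_t^\top v\le\beta^{\max}_t$: the LP optimum over $C'$ cannot exceed $\beta^{\max}_t$, and since the constraint is already present the optimum also cannot force a value \emph{below} what is consistent, so the recomputed bound equals (or is dominated by) the existing one and adds no new halfspace. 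Once this fixed-point step is nailed down, everything else is bookkeeping.
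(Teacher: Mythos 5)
Your proposal is correct and follows essentially the same route as the paper's (much terser) proof: at most one upper and one lower bound per template, and re-solving on the augmented store is idempotent up to logical equivalence, so new TGCT inequalities appear only when the store state genuinely changes. One small simplification for the fixed-point step you worry about at the end: since every inequality added by $\mathrm{TGCT}_{\mathcal{G}}$ is a valid consequence of $C$ (it is the LP optimum over $\mathrm{Feas}(C)$), we in fact have $\mathrm{Feas}(\mathrm{TGCT}_{\mathcal{G}}(C))=\mathrm{Feas}(C)$, so the recomputed optima are literally identical to the first-pass ones and the ``strictly tighter bound'' scenario cannot occur.
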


\begin{proof}
For each $g\in\mathcal{G}$, TGCT contributes at most one certified upper bound and one certified lower bound,
both uniquely determined by the LP optimum over the current store. Once present, re-solving produces the same
bound unless the feasible region changes. Hence the stated bound follows.
\end{proof}

\subsection{Algorithm: \textsc{PropagateNode} (expanded with TGCT)}
\label{subsec:propagatenode}

We now present propagation as a certificate-driven closure procedure. The algorithm below is compatible
with your earlier pseudocode, but makes explicit where certificates are created and how TGCT is applied.

\begin{algorithm}[t]
\caption{\textsc{PropagateNode}: incremental certificate-driven propagation at a node}
\label{alg:propagatenode}
\begin{algorithmic}[1]
\Require Node $(R,\alpha,C,W)$, global learned store $\Learn$, template set $\mathcal{G}$
\Ensure Updated $(C,W)$ and status $\mathrm{PRUNE}$ or $\mathrm{OPEN}$
\State Initialize $C \gets \Aff \cup \Dom(R) \cup \NegP \cup \mathrm{PhaseConsequences}(\alpha)\cup \Learn$
\Repeat
  \State $C \gets \mathsf{Hull}(C)$ \Comment{insert envelopes using current bounds}
  \State $C \gets \mathrm{TGCT}_{\mathcal{G}}(C)$; append dual certificates to $W$
  \State Apply certified stabilization; append stability certificates to $W$
  \State Inject any certificate-justified lemmas into $C$ (and into $\Learn$ if globally valid)
  \State Check LP feasibility of $C$
  \If{$C$ is infeasible}
     \State Extract a Farkas certificate $\lambda$; append $(\lambda,\mathrm{core\ info})$ to $W$
     \State \Return $\mathrm{PRUNE}$
  \EndIf
\Until{no changes in bounds/stability/learned lemmas (fixed point)}
\State \Return $\mathrm{OPEN}$
\end{algorithmic}
\end{algorithm}

\begin{remark}[Incrementality and certificate caching]
All LP solves should be incremental: warm-start from prior bases, reuse factorization data,
and cache certificates. Importantly, caching is logically safe because certificates are checked objects:
a cached certificate can be reused if the constraint matrix row-set for the underlying LP has not changed.
This is a structural advantage of certificate-carrying propagation, not merely an implementation trick.
\end{remark}
\section{Exactness Gate: Leaving the Relaxation Safely}
\label{sec:gate}

When propagation returns $\mathrm{OPEN}$, relaxations have not decided the node. The \emph{exactness gate} selectively enforces exact PWL semantics on a reduced unstable subset, returning either a validated counterexample or sound UNSAT evidence.

\subsection{Exactness as a monotone family of encodings}
\label{subsec:monotone-family}

After \textsc{PropagateNode} reaches a fixed point, all provably stable ReLUs have been eliminated,
and the remaining uncertain nonlinearities constitute an \emph{unstable index set} $U$.
We index each unstable ReLU by a pair $\nu=(i,j)$ (layer and neuron), with variables
$s_{\nu}$ (pre-activation) and $z_{\nu}$ (post-activation). Let $(l_{\nu},u_{\nu})$ be the current bounds
with $l_{\nu}<0<u_{\nu}$ \cite{reluplex2017,ehlers2017,mipverify2019,andersonMP20}
.

We define a \emph{partial exactness} operator that strengthens the relaxation by enforcing exact ReLU
semantics only on a chosen subset $S\subseteq U$.

\begin{definition}[Partial exact encoding]
\label{def:partial-exact}
Fix a node $(R,\alpha,C,W)$ after propagation and let $U$ be its unstable set.
For any $S\subseteq U$, define
\[
C^{S}\ :=\ \Bigl(C\setminus \Rel(S)\Bigr)\ \cup\ \Exact_{S},
\]
where $\Rel(S)$ denotes the relaxation constraints (e.g.\ convex-hull envelopes) currently used for
ReLUs in $S$, and $\Exact_{S}$ denotes the exact ReLU constraints for those units:
\[
\Exact_{\nu}\ :=\ \bigl(z_{\nu}=0 \wedge s_{\nu}\le 0\bigr)\ \ \vee\ \ \bigl(z_{\nu}=s_{\nu}\wedge s_{\nu}\ge 0\bigr),
\qquad \nu\in S.
\]
The constraints for units in $U\setminus S$ remain relaxed.
\end{definition}

The family $\{C^{S}\}_{S\subseteq U}$ is \emph{monotone} in $S$ (adding exactness can only remove models).

\begin{lemma}[Monotone strengthening]
\label{lem:monotone}
If $S\subseteq T\subseteq U$, then every model of $C^{T}$ is a model of $C^{S}$, i.e.,
\[
\mathrm{Feas}(C^{T})\ \subseteq\ \mathrm{Feas}(C^{S}).
\]
In particular, if $C^{S}$ is infeasible then $C^{T}$ is infeasible.
\end{lemma}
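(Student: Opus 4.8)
The plan is to reduce everything to a single pointwise fact: for each unstable unit, the exact ReLU constraint is a \emph{refinement} of its convex-hull envelope. No LP or certificate machinery is needed here, since the claim is purely about solution sets. First I would fix the common base $C_0 := C\setminus \Rel(U)$ (the affine block $\Aff$, the region constraints $\Dom(R)$, the negated-safety block $\NegP$, the learned lemmas $\Learn$, the maintained pre-activation bounds $l_\nu\le s_\nu\le u_\nu$ for $\nu\in U$, and relaxations of any non-ReLU nonlinearities), and record the decompositions
\[
C^{S} \;=\; C_0 \,\cup\!\! \bigcup_{\nu\in U\setminus S}\!\!\Rel(\{\nu\}) \,\cup \bigcup_{\nu\in S}\Exact_\nu,
\qquad
C^{T} \;=\; C_0 \,\cup\!\! \bigcup_{\nu\in U\setminus T}\!\!\Rel(\{\nu\}) \,\cup \bigcup_{\nu\in T}\Exact_\nu,
\]
which are well defined because $S\subseteq T$ gives $\Rel(S)\subseteq\Rel(T)$. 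The only blocks on which the two stores differ concern the units in $T\setminus S$: exact in $C^{T}$, relaxed in $C^{S}$.

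The key step is the one-unit entailment: for every unstable $\nu$ with $l_\nu<0<u_\nu$, the four hull inequalities \eqref{eq:hull} are implied by $\Exact_\nu$ together with the bound constraints $l_\nu\le s_\nu\le u_\nu$. This is exactly the soundness of the convex outer envelope asserted after \eqref{eq:hull}: the graph of $\max(0,s_\nu)$ over $[l_\nu,u_\nu]$ lies inside its convex hull. I would verify it by the two ReLU cases. If $s_\nu\le 0$ then $z_\nu=0$, and $z_\nu\ge 0$, $z_\nu\ge s_\nu$, $z_\nu\le u_\nu$, and $z_\nu\le \tfrac{u_\nu}{u_\nu-l_\nu}(s_\nu-l_\nu)$ all hold because the right-hand side of the last is nonnegative for $s_\nu\ge l_\nu$. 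If $s_\nu\ge 0$ then $z_\nu=s_\nu$, and each inequality reduces to an affine statement valid on $[0,u_\nu]$, with the sloped upper bound holding because the relevant line is the chord through the endpoints of the (convex) ReLU graph — a routine check.

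With the key step in hand, the conclusion is a short model chase. Take any $v\models C^{T}$. Then $v\models C_0$, so $v$ satisfies $l_\nu\le s_\nu\le u_\nu$ for all $\nu\in U$. For $\nu\in U\setminus T$ (and $U\setminus T\subseteq U\setminus S$ since $S\subseteq T$), $v$ satisfies $\Rel(\{\nu\})$ directly. For $\nu\in S\subseteq T$, $v$ satisfies $\Exact_\nu$, which is exactly what $C^{S}$ requires. For $\nu\in T\setminus S$, $v$ satisfies $\Exact_\nu$ and the bounds, hence $v\models\Rel(\{\nu\})$ by the key step. Since $U\setminus S=(U\setminus T)\cup(T\setminus S)$, the assignment $v$ satisfies every block of $C^{S}$, so $v\models C^{S}$. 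Hence $\mathrm{Feas}(C^{T})\subseteq\mathrm{Feas}(C^{S})$, and the infeasibility transfer is the contrapositive: if $\mathrm{Feas}(C^{S})=\emptyset$ then $\mathrm{Feas}(C^{T})=\emptyset$. (Node soundness of each $C^{S}$, if wanted alongside, follows from Assumption~\ref{assm:sound} since $\Exact_S$ is the exact semantics on $S$.)

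I expect the only real obstacle to be bookkeeping rather than mathematics: making the decomposition precise — specifically, pinning down what $\Rel(\cdot)$ selects for a unit, that removing $\Rel(S)$ and adjoining the disjunctive block $\Exact_S$ is unambiguous as a description of a solution set, and that the pre-activation bounds used to construct the envelopes are genuinely retained in $C_0$ and therefore available when invoking the key step. Once that is settled, the lemma is the unit-by-unit application of $\Exact_\nu\wedge(l_\nu\le s_\nu\le u_\nu)\models\Rel(\{\nu\})$, and monotonicity in $S$ follows.
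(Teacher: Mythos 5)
Your proposal is correct, and it is in fact more careful than the paper's own argument. The paper's proof of Lemma~\ref{lem:monotone} is a one-liner: ``$C^{T}$ replaces relaxations by exact constraints on a superset of units, hence it adds constraints relative to $C^{S}$; thus feasibility can only shrink.'' Read literally as a statement about constraint sets this is imprecise, because $C^{T}$ does not merely add constraints --- it also \emph{removes} the envelope constraints $\Rel(T\setminus S)$ that $C^{S}$ retains, and removing constraints could in principle admit new models. The step that makes the lemma true is exactly the one you isolate and prove: for each unstable unit $\nu$, the disjunctive exact constraint $\Exact_\nu$ together with the retained bounds $l_\nu\le s_\nu\le u_\nu$ entails all four hull inequalities of~\eqref{eq:hull} (your two-case check is right; in the active case the chord inequality reduces to $s_\nu\le u_\nu$ after multiplying through by $u_\nu-l_\nu>0$ and using $l_\nu<0$). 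With that per-unit entailment, your model chase over the decomposition $U\setminus S=(U\setminus T)\cup(T\setminus S)$ gives the inclusion, and the infeasibility transfer is the contrapositive. Your closing caveat is also the right one to flag: the argument needs the pre-activation bounds to live in the common base $C_0$ rather than inside $\Rel(\{\nu\})$, which is consistent with the paper's architecture (TGCT adds certified bound constraints to the store independently of the envelopes), but is not stated explicitly in Definition~\ref{def:partial-exact}. In short: same high-level idea as the paper (exactness refines relaxation), but your version supplies the substantive entailment that the paper's proof tacitly assumes.
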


\begin{proof}
$C^{T}$ replaces relaxations by exact constraints on a superset of units, hence it adds constraints relative to $C^{S}$.
Thus feasibility can only shrink.
\end{proof}

Lemma~\ref{lem:monotone} is the formal reason the gate can safely prune using \emph{partial} exactness:
UNSAT at a weaker stage implies UNSAT at all stronger stages, including the full exact encoding $C^{U}$.

\subsection{Candidate extraction and SAT-side validation}
\label{subsec:candidate-validation}

Let $\hat v$ be any feasible assignment of $C$ returned by the LP engine.
We write $\hat x := \pi_x(\hat v)$ for the projection to the input coordinates.

\noindent\textbf{Exact semantic evaluation..}
Compute $\NNet(\hat x)$ by forward evaluation of the network equations, producing exact layer values
$z^{(i)}(\hat x)$ under the real semantics of $\ReLU$ (and identity output layer, if used).
If $\hat x\in R$ and $\neg\Safety\bigl(z^{(L)}(\hat x)\bigr)$ holds, then we return $\SAT$ with witness $\hat x$.

\noindent\textbf{Spuriousness..}
Otherwise, $\hat v$ is a \emph{relaxation witness} that is not an exact counterexample.
This spuriousness is not treated as a failure: it becomes a structured signal that
drives which nonlinearities must be made exact (and/or which splits must be performed).

\subsection{Spuriousness diagnosis via a violation set}
\label{subsec:violation-set}

A relaxation assignment $\hat v$ may violate exact ReLU semantics at some unstable units.
We formalize this by a per-unit residual and a \emph{violation set}.

\begin{definition}[ReLU residual and violation set]
\label{def:violation}
For an unstable unit $\nu\in U$, define the exact ReLU graph
\[
G_{\nu}\ :=\ \{(s_{\nu},z_{\nu})\in\R^2:\ z_{\nu}=\max(0,s_{\nu})\}.
\]
Given a relaxation assignment $\hat v$, define the residual
\[
r_{\nu}(\hat v)\ :=\ \left|\,\hat z_{\nu}-\max(0,\hat s_{\nu})\,\right|,
\]
and the violation set
\[
V(\hat v)\ :=\ \{\nu\in U:\ r_{\nu}(\hat v)>0\}.
\]
\end{definition}

The next lemma is a basic but crucial correctness hook: enforcing exactness on any violated unit
eliminates the current spurious witness.

\begin{lemma}[Witness-elimination by exactness]
\label{lem:witness-elimination}
Let $\hat v$ satisfy the relaxation $C$ and let $\nu\in V(\hat v)$.
Then $\hat v$ is not a model of $C^{\{\nu\}}$.
More generally, if $S$ intersects $V(\hat v)$ (i.e.\ $S\cap V(\hat v)\neq\emptyset$), then
$\hat v\not\models C^{S}$.
\end{lemma}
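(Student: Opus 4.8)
The plan is to reduce the claim to a single elementary semantic identity: the solution set of the disjunctive constraint $\Exact_{\nu}$ in the $(s_{\nu},z_{\nu})$-plane is exactly the exact ReLU graph $G_{\nu}$ of Definition~\ref{def:violation}. First I would expand
$\Exact_{\nu} = \bigl(z_{\nu}=0 \wedge s_{\nu}\le 0\bigr) \vee \bigl(z_{\nu}=s_{\nu}\wedge s_{\nu}\ge 0\bigr)$
and observe that the first disjunct traces the ray $\{(s,0):s\le 0\}$ and the second the ray $\{(s,s):s\ge 0\}$, whose union is precisely $\{(s,\max(0,s)):s\in\R\}=G_{\nu}$. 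Hence for any assignment, $(s_{\nu},z_{\nu})$ satisfies $\Exact_{\nu}$ if and only if $z_{\nu}=\max(0,s_{\nu})$, i.e.\ if and only if the residual $r_{\nu}$ vanishes.

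Next I would handle the single-unit case. Since $\nu\in V(\hat v)$ means $r_{\nu}(\hat v)=|\hat z_{\nu}-\max(0,\hat s_{\nu})|>0$, the identity above gives $(\hat s_{\nu},\hat z_{\nu})\notin G_{\nu}$, so $\hat v$ falsifies the constraint $\Exact_{\nu}$. By Definition~\ref{def:partial-exact}, $\Exact_{\nu}$ is one of the constraints of $C^{\{\nu\}}=\bigl(C\setminus\Rel(\{\nu\})\bigr)\cup\Exact_{\{\nu\}}$; dropping the relaxation constraints $\Rel(\{\nu\})$ is irrelevant to this conclusion, because a model of $C^{\{\nu\}}$ must in particular satisfy, for the unit $\nu$, at least one disjunct of $\Exact_{\nu}$ — which $\hat v$ does not. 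Therefore $\hat v\not\models C^{\{\nu\}}$. For the general case, suppose $S\cap V(\hat v)\neq\emptyset$ and fix $\nu\in S\cap V(\hat v)$; then $\Exact_{S}=\bigwedge_{\mu\in S}\Exact_{\mu}$ contains $\Exact_{\nu}$ as a conjunct, which $\hat v$ falsifies by the same argument, so $\hat v\not\models C^{S}$. (Lemma~\ref{lem:monotone} gives an alternative route once the single-unit case is known, but the direct argument avoids extra $\{\nu\}\subseteq S$ bookkeeping.)

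The only point needing care — and it is genuinely minor — is the bookkeeping around the disjunctive form of $\Exact_{\nu}$: one must be explicit that ``$\hat v\models C^{S}$'' requires, for each exactified unit, satisfaction of the whole disjunction, so that falsifying \emph{both} disjuncts at a single unit already excludes $\hat v$. No convexity, duality, or LP machinery enters; the lemma is purely a statement about the pointwise semantics of the exact ReLU constraint, and the proof is essentially the observation that $\Exact_{\nu}$ is a literal re-encoding of $z_{\nu}=\max(0,s_{\nu})$.
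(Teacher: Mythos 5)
Your proof is correct and follows essentially the same route as the paper's: observe that a violated unit $\nu\in V(\hat v)$ means $(\hat s_{\nu},\hat z_{\nu})\notin G_{\nu}$, hence $\hat v$ falsifies the disjunctive constraint $\Exact_{\nu}$ contained in $C^{\{\nu\}}$ (and in $C^{S}$ for any $S\ni\nu$). Your explicit verification that $\Exact_{\nu}$ is a literal re-encoding of the graph $z_{\nu}=\max(0,s_{\nu})$ is a useful detail the paper leaves implicit, but the argument is the same.
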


\begin{proof}
If $\nu\in V(\hat v)$, then $(\hat s_{\nu},\hat z_{\nu})\notin G_{\nu}$, hence $\hat v$ violates
the disjunctive exact constraint $\Exact_{\nu}$ and cannot satisfy $C^{\{\nu\}}$.
If $S\cap V(\hat v)\neq\emptyset$, pick any $\nu\in S\cap V(\hat v)$ and apply the same argument.
\end{proof}

Lemma~\ref{lem:witness-elimination} is the core mechanism behind the new exactness-gate technique below:
we refine $S$ only when we have a \emph{concrete, checkable} reason (a violated unit) that must be enforced.

\subsection{Certificate-guided reduced exactness (new gate technique)}
\label{subsec:certificate-guided-gate}

We now define the \emph{exactness gate} as an abstraction-refinement loop over subsets $S\subseteq U$\cite{pulinaCAV10,neverAMAI11,never2_2024}
.
This is not a generic CEGAR reuse: the refinement step is specialized to PWL networks and is driven by
the violation set $V(\hat v)$, guaranteeing progress in excluding the spurious witness
(Lemma~\ref{lem:witness-elimination}) and preserving soundness (Lemma~\ref{lem:monotone}).

\noindent\textbf{Gate decision problem..}
At a node we wish to decide the satisfiability of the exact counterexample query,
but we do so through the monotone sequence:
\[
C^{S_0}\ \supseteq\ C^{S_1}\ \supseteq\ \cdots\ \supseteq\ C^{U},
\qquad S_0=\emptyset,\ \ S_{k+1}\supseteq S_k.
\]
At each stage $S_k$ we solve the \emph{partial exact query}
\[
C^{S_k}\ \wedge\ \neg\Safety.
\]
If it is UNSAT, we may safely prune the node (by Lemma~\ref{lem:monotone}).
If it is SAT, we validate the extracted input by exact forward evaluation as in
Subsection~\ref{subsec:candidate-validation}; if validation succeeds we return $\SAT$,
otherwise we refine $S_k$ using $V(\hat v)$.

\begin{algorithm}[t]
\caption{\textsc{ExactnessGate}: certificate-guided reduced exactness}
\label{alg:exactgate}
\begin{algorithmic}[1]
\Require Node after propagation $(R,\alpha,C,W)$ with unstable set $U$, budget $B_{\mathrm{gate}}$
\Ensure $\SAT$ with validated witness, or $\mathrm{PRUNE}$ with certificates, or $\mathrm{DEFER}$ (needs split)
\State $S \gets \emptyset$
\While{budget $B_{\mathrm{gate}}$ remains}
  \State Solve partial exact query: $\;C^{S}\wedge \neg\Safety$
  \If{UNSAT}
     \State Export certificates/cores; append to $W$
     \State \Return $\mathrm{PRUNE}$
  \ElsIf{SAT with model $\hat v$}
     \State $\hat x\gets \pi_x(\hat v)$; evaluate $\NNet(\hat x)$ exactly
     \If{$\hat x\in R$ and $\neg\Safety(\NNet(\hat x))$}
        \State \Return $\SAT$ with witness $\hat x$
     \EndIf
     \State Compute violation set $V(\hat v)$
     \If{$V(\hat v)=\emptyset$}
        \State \Comment{$\hat v$ is exact-feasible but not a counterexample; relaxations were too weak to imply safety}
        \State \Return $\mathrm{DEFER}$
     \EndIf
     \State Refine $S \gets S \cup \mathrm{Select}(V(\hat v))$ \Comment{add at least one violated unit}
  \Else
     \State \Return $\mathrm{DEFER}$ \Comment{timeout/unknown at the exact solver}
  \EndIf
\EndWhile
\State \Return $\mathrm{DEFER}$
\end{algorithmic}
\end{algorithm}

\noindent\textbf{Selection policy with a non-speculative guarantee..}
The only requirement on $\mathrm{Select}(\cdot)$ for correctness is:
\[
\mathrm{Select}(V)\ \subseteq\ V\quad\text{and}\quad \mathrm{Select}(V)\neq \emptyset\ \text{when}\ V\neq\emptyset.
\]
This ensures that each refinement step adds at least one violated unit, hence
eliminates the current spurious witness by Lemma~\ref{lem:witness-elimination}.
Heuristics (e.g.\ selecting the largest residual, or selecting a dependency-closed subset toward the output)
affect performance but are not required for soundness.

\subsection{Soundness and finite-step termination of the gate}
\label{subsec:gate-soundness-termination}

\begin{proposition}[Sound pruning by partial exactness]
\label{prop:partial-unsat-sound}
If for some $S\subseteq U$ the partial exact query $C^{S}\wedge \neg\Safety$ is UNSAT, then
the full exact query $C^{U}\wedge \neg\Safety$ is UNSAT. Hence the node can be safely pruned.
\end{proposition}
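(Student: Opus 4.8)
The plan is to obtain the proposition as a short consequence of the monotone-strengthening lemma (Lemma~\ref{lem:monotone}) together with the node soundness invariant (Assumption~\ref{assm:sound}), in three steps.

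First I would instantiate Lemma~\ref{lem:monotone} with the pair $S\subseteq U$, taking $T:=U$, to get $\mathrm{Feas}(C^{U})\subseteq\mathrm{Feas}(C^{S})$. Intuitively, passing from partial exactness on $S$ to full exactness on $U$ only replaces the convex-hull envelopes of the units $\nu\in U\setminus S$ by the exact disjunctive constraints $\Exact_{\nu}$ of Definition~\ref{def:partial-exact}, so the feasible set can only shrink. Second, I would translate the UNSAT hypothesis: ``$C^{S}\wedge\neg\Safety$ is UNSAT'' says exactly that no assignment in $\mathrm{Feas}(C^{S})$ satisfies $\neg\Safety$. Intersecting the inclusion from Step~1 with the set $\{v:\neg\Safety(v)\}$ gives $\mathrm{Feas}(C^{U})\cap\{v:\neg\Safety(v)\}\subseteq\mathrm{Feas}(C^{S})\cap\{v:\neg\Safety(v)\}=\emptyset$, so $C^{U}\wedge\neg\Safety$ is UNSAT as well. (Equivalently, since the store already carries the block $\NegP$, UNSAT of $C^{S}$ immediately yields UNSAT of the stronger $C^{U}$ by Lemma~\ref{lem:monotone}; the two readings coincide.)

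Third, I would connect UNSAT of $C^{U}\wedge\neg\Safety$ to the actual exact counterexample obligation $\mathcal{O}(R,\alpha)$. The key observation is that $C^{U}$ remains a \emph{sound} over-approximation of the exact node constraints: the affine block $\Aff$, the region block $\Dom(R)$, and the phase consequences of $\alpha$ are exact by construction; the specializations installed by stabilization during propagation are valid for every exact-feasible point by Lemma~\ref{lem:stabilize}; the learned block is globally valid by Lemma~\ref{lem:learn-monotone}; and each newly installed constraint $\Exact_{\nu}$ for $\nu\in U$ is literally the exact ReLU graph relation, hence holds at every exact-feasible assignment. Consequently $\Exact(R,\alpha)\subseteq\mathrm{Feas}(C^{U})$, i.e., the node invariant persists after full exact encoding. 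Therefore any exact counterexample at the node --- an assignment $v\in\Exact(R,\alpha)$ with $\neg\Safety(v)$ --- would be a feasible point of $C^{U}\wedge\neg\Safety$, contradicting Step~2; so $\mathcal{O}(R,\alpha)$ is false and the node is safely closed, which is precisely the Prune--Infeasible rule (Lemma~\ref{lem:prune-infeasible-sound}).

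I do not expect a genuine obstacle: the statement is essentially a one-line corollary of Lemma~\ref{lem:monotone} once the bookkeeping is made explicit. The only point that deserves care is Step~3 --- making sure that enforcing exactness never \emph{breaks} soundness, i.e., that $\Exact_{S}$ is an over-approximation (in fact an exact encoding) of the ReLU semantics on $S$ rather than an accidental under-approximation. This is immediate from Definition~\ref{def:partial-exact}, but it is the hinge on which ``UNSAT under partial exactness $\Rightarrow$ UNSAT of the true query'' rests, so I would spell it out rather than leave it implicit.
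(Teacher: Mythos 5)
Your proposal is correct and follows essentially the same route as the paper: the paper's proof is exactly your Steps 1--2, instantiating Lemma~\ref{lem:monotone} with $T=U$ to get $\mathrm{Feas}(C^{U})\subseteq\mathrm{Feas}(C^{S})$ and concluding that the stronger system is also infeasible. Your Step~3 merely makes explicit the justification for the phrase ``hence the node can be safely pruned,'' which the paper leaves to the node soundness invariant and Lemma~\ref{lem:prune-infeasible-sound}; that elaboration is accurate but not a different argument.
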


\begin{proof}
By Lemma~\ref{lem:monotone}, $\mathrm{Feas}(C^{U})\subseteq \mathrm{Feas}(C^{S})$.
If $C^{S}\wedge \neg\Safety$ has no feasible point, then the stronger system
$C^{U}\wedge \neg\Safety$ also has none.
\end{proof}

\begin{theorem}[Gate progress and bounded refinements]
\label{thm:gate-termination}
Assume that the exact solver subroutine always returns either SAT or UNSAT (no timeouts)
for each partial exact query $C^{S}\wedge \neg\Safety$.
Then Algorithm~\ref{alg:exactgate} performs at most $|U|$ refinement steps and terminates with either:
(i) $\SAT$ and a validated witness, or (ii) $\mathrm{PRUNE}$ with UNSAT evidence, or (iii) $\mathrm{DEFER}$
only in the case that an exact-feasible model exists but is not a counterexample and safety is not implied by $C$.
\end{theorem}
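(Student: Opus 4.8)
The plan is to prove the refinement bound by a monotone-quantity argument, and then to classify the termination modes by a short case analysis resting on Lemma~\ref{lem:witness-elimination} (witness elimination), Lemma~\ref{lem:monotone} (monotone strengthening), and Proposition~\ref{prop:partial-unsat-sound} (sound partial-UNSAT pruning).

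\emph{Step 1 --- the refinement bound.} First I would show that each execution of the update $S\gets S\cup\mathrm{Select}(V(\hat v))$ strictly enlarges $S$. Suppose the loop reaches this update with a model $\hat v\models C^{S}\wedge\neg\Safety$ that failed the counterexample check and has $V(\hat v)\neq\emptyset$. For any $\nu\in\mathrm{Select}(V(\hat v))\subseteq V(\hat v)$ we have $(\hat s_{\nu},\hat z_{\nu})\notin G_{\nu}$; if $\nu$ were already in $S$ then $C^{S}$ would contain the exact constraint $\Exact_{\nu}$, whose feasible set in the $(s_{\nu},z_{\nu})$-plane is exactly $G_{\nu}$, forcing $(\hat s_{\nu},\hat z_{\nu})\in G_{\nu}$ --- a contradiction. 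So at least one fresh unit enters $S$; this is essentially the contrapositive of Lemma~\ref{lem:witness-elimination}, so it should go through cleanly. Since $S\subseteq U$ throughout, at most $|U|$ refinements occur, and (assuming $B_{\mathrm{gate}}$ permits those $|U|$ rounds) the loop either exits earlier or reaches $S=U$.

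\emph{Step 2 --- the three exits.} A $\mathrm{PRUNE}$ return happens only when $C^{S}\wedge\neg\Safety$ is UNSAT for some $S\subseteq U$; Proposition~\ref{prop:partial-unsat-sound} then upgrades this to UNSAT of $C^{U}\wedge\neg\Safety$, giving sound node closure. A $\SAT$ return happens only after the explicit exact forward-evaluation check $\hat x\in R\wedge\neg\Safety(\NNet(\hat x))$ succeeds, so the witness is validated independently of the relaxation. The structural point I would isolate is that once $S=U$, any SAT model $\hat v$ of $C^{U}$ makes every ReLU exact --- stabilized units by the exact specialization already in $C$, units of $U$ by $\Exact_{U}$ --- so together with $\Aff$ and $x\in R$ from $\Dom(R)$, $\hat v$ coincides with the unique forward evaluation of $\NNet$ on $\hat x$, i.e.\ $z^{(L)}(\hat x)=\hat z^{(L)}$; using $\hat v\models\NegP$ and faithfulness of that encoding, one concludes $\neg\Safety(\NNet(\hat x))$ and returns $\SAT$. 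Hence in that regime, once $S=U$ the loop never reaches the $V(\hat v)=\emptyset$ branch.

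\emph{Step 3 --- characterizing $\mathrm{DEFER}$, and the main obstacle.} A $\mathrm{DEFER}$ can arise only from the exact-solver timeout branch (excluded by hypothesis), from budget exhaustion (excluded once $B_{\mathrm{gate}}\ge|U|$ by Step~1), or from the $V(\hat v)=\emptyset$ branch. In the last case, $V(\hat v)=\emptyset$ says $\hat v$ satisfies exact ReLU semantics on all of $U$; combined with the stabilized units and with $\Aff$, $\Dom(R)$, and the phase consequences of $\alpha$ inside $C^{S}$, this yields $\hat v\in\Exact(R,\alpha)$, an exact-feasible assignment that is not a validated counterexample (the check failed); and since $\mathrm{Feas}(C^{S})\subseteq\mathrm{Feas}(C)$ by Lemma~\ref{lem:monotone}, $\hat v\models C\wedge\NegP$, so the relaxation store $C$ does not entail $\Safety$ --- which is exactly alternative~(iii). \textbf{The main obstacle} I anticipate is the interface between the linear encoding $\NegP$ and the semantic predicate $\neg\Safety$: a sound encoding of a strict safety violation is typically a non-strict over-approximation, so a model of $\NegP$ need not forward-evaluate to a genuine counterexample, which is precisely what can keep the $V(\hat v)=\emptyset$ branch reachable even at $S=U$. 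I would resolve this by keeping clause~(iii) phrased exactly as above --- robust to the gap --- and, if a cleaner statement is desired, adding the hypothesis that $\NegP$ encodes $\neg\Safety$ exactly, under which the Step~2 argument shows the $V(\hat v)=\emptyset$ branch is unreachable once $S=U$ and the gate always terminates with $\SAT$ or $\mathrm{PRUNE}$.
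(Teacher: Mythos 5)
Your proof is correct and follows the same overall strategy as the paper's: bound the number of refinements by the strict growth of $S$ inside the finite set $U$, then classify the three exits using Proposition~\ref{prop:partial-unsat-sound} for $\mathrm{PRUNE}$ and the explicit forward-evaluation check for $\SAT$. You do, however, supply three substantive pieces that the paper's sketch omits. First, the paper asserts that each refinement ``adds at least one new element'' without arguing freshness; your observation that $\hat v\models C^{S}$ forces $r_{\nu}(\hat v)=0$ for every $\nu\in S$, hence $V(\hat v)\cap S=\emptyset$, is exactly the missing justification that the selected units are not already in $S$. Second, you correctly flag that the ``at most $|U|$ refinements, and $\mathrm{DEFER}$ only in case (iii)'' conclusion implicitly requires the budget $B_{\mathrm{gate}}$ to permit those rounds --- otherwise the final $\mathrm{DEFER}$ return of Algorithm~\ref{alg:exactgate} fires for a reason not covered by clause (iii); the paper's proof is silent on this. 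Third, your analysis of the gap between the linear encoding $\NegP$ and the semantic predicate $\neg\Safety$ correctly identifies why the $V(\hat v)=\emptyset$ branch can remain reachable even at $S=U$, and your choice to keep clause (iii) phrased robustly matches the theorem as stated. The only caution is your Step~2 claim that the $V(\hat v)=\emptyset$ branch becomes unreachable once $S=U$: that holds only under the extra faithfulness hypothesis on $\NegP$ that you yourself name in Step~3, so it should be presented as conditional --- but since your final conclusion does not depend on it, the proof stands.
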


\begin{proof}
Each refinement step strictly increases $S$ by adding at least one new element from $V(\hat v)\subseteq U$,
so $S$ can be enlarged at most $|U|$ times.
At any stage, UNSAT implies safe pruning by Proposition~\ref{prop:partial-unsat-sound}.
If SAT produces a validated counterexample, we return $\SAT$.
If SAT produces $\hat v$ with $V(\hat v)=\emptyset$ but the safety predicate is not violated under exact
evaluation, then $\hat v$ is an exact-feasible non-counterexample witness; the only remaining way to close
the node is via splitting (to strengthen bounds so that relaxation implies safety) or via separate proof obligations.
\end{proof}

\subsection{Interface to proof production and learning}
\label{subsec:gate-proof-interface}

When an exact check concludes UNSAT at some stage $S$, the gate can export:
(i) a certificate of infeasibility for the linearized branch(es) explored (e.g.\ Farkas witnesses after fixing guards),
and/or (ii) an UNSAT core over guard literals corresponding to exact units in $S$.
These objects are \emph{consumed} by the learning layer:
cores yield conflict clauses, while dual/Farkas certificates yield checkable linear lemmas.
This paper uses these artifacts operationally; the companion proof-carrying foundation
provides the independent checker and normalization procedures.
\section{Learning Across Nodes}
Learning reduces repeated work across the search tree. We separate learning into linear lemmas and
Boolean conflict clauses.

\subsection{Certified linear lemma learning}
A learned linear lemma is an inequality $\ell(v): c^\top v \le \tau$ that is valid under the current node's
exact constraints (or under its relaxation but proven globally valid).
The solver stores $\ell$ in $\Learn$ only if it comes with a checkable entailment certificate.

\subsection{Branch--Merge Lemmas}
\label{sec:merge}

Let $R$ be a region split into $R_1,R_2$ (so $R = R_1 \cup R_2$ and $R_1 \cap R_2 = \emptyset$).
Let $C_i$ be the (sound) relaxation store for branch $R_i$.
Fix a linear template $g^\top v$ (e.g., a safety margin or an output coordinate).

\begin{theorem}[Merge rule: certified global bound from two branches]
\label{thm:merge}
Assume that for $i\in\{1,2\}$ we have a dual bound certificate $\lambda_i \ge 0$ such that
$\lambda_i^\top A_i = g^\top$ and thus
\[
(\forall v \models C_i)\quad g^\top v \le \beta_i \ \ \text{where}\ \ \beta_i := \lambda_i^\top b_i.
\]
Define $\beta := \max\{\beta_1,\beta_2\}$.
Then the inequality
\[
g^\top v \le \beta
\]
is valid for all exact-feasible points on the \emph{parent} region $R$, hence it may be stored
as a learned lemma at the parent (and reused globally whenever the same template $g$ applies).
\end{theorem}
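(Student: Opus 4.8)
The plan is to reduce the statement to a single case split over which child region contains a given exact-feasible point, and then invoke dual-certificate soundness (Lemma~\ref{lem:dual-sound}) on that child together with the definition of $\beta$. Concretely, fix any assignment $v^\star$ that is exact-feasible on the parent, i.e.\ $v^\star\in\Exact(R,\alpha)$ where $\alpha$ is the (unchanged) phase commitment carried through the region split. Since the split is a region partition with $R=R_1\cup R_2$ and $R_1\cap R_2=\emptyset$, the refinement-partition contract (Lemma~\ref{lem:refine-contract}, equivalently the Split rule, Lemma~\ref{lem:split-sound}) gives $\Exact(R,\alpha)=\Exact(R_1,\alpha)\cup\Exact(R_2,\alpha)$, so $v^\star\in\Exact(R_i,\alpha)$ for some $i\in\{1,2\}$.

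Next I would push $v^\star$ through the node soundness invariant of the chosen branch. By Assumption~\ref{assm:sound} applied to the child node $(R_i,\alpha,C_i,W_i)$ we have $\Exact(R_i,\alpha)\subseteq\Feas(C_i)$, hence $v^\star\models C_i$; writing $C_i$ in the normalized inequality form $A_i v\le b_i$ (as in Definition~\ref{def:dual-cert}), this is $A_i v^\star\le b_i$. Now apply dual-certificate soundness (Lemma~\ref{lem:dual-sound}) with the hypothesized $\lambda_i\ge 0$ satisfying $\lambda_i^\top A_i=g^\top$ and $\lambda_i^\top b_i=\beta_i$: it yields $g^\top v^\star\le\beta_i$. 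Since $\beta_i\le\max\{\beta_1,\beta_2\}=\beta$ regardless of which $i$ occurred, we conclude $g^\top v^\star\le\beta$. As $v^\star$ was an arbitrary exact-feasible point on $R$, the inequality $g^\top v\le\beta$ holds on all of $\Exact(R,\alpha)$, which is exactly the claimed parent-level validity.

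Finally I would record why this inequality may be \emph{stored and reused}: its validity scope is $\Exact(R,\alpha)$, and since every descendant node $(R',\alpha')$ of the parent satisfies $\Exact(R',\alpha')\subseteq\Exact(R,\alpha)$ (iterated partitioning via Lemma~\ref{lem:refine-contract}), injecting $g^\top v\le\beta$ into those stores preserves the node invariant; when $R$ is the root ($R=D$), the lemma is globally valid in the sense of Lemma~\ref{lem:learn-monotone}, so it can be added to $\Learn$ and will never invalidate a prior pruning proof. This also makes Merge an inference rule of the calculus: its premises (two checkable dual certificates $\lambda_1,\lambda_2$) and its conclusion ($g^\top v\le\max\{\beta_1,\beta_2\}$, with certificate-of-record the pair $(\lambda_1,\lambda_2)$ together with the arithmetic comparison of $\beta_1,\beta_2$) are all independently re-checkable.

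I do not expect a genuine obstacle: the content is essentially ``the maximum of two valid upper bounds is a valid upper bound on the union.'' The only place demanding care is keeping the quantifier scopes straight --- the $\lambda_i$ certify against the \emph{child} stores $C_i$, not against any parent store (no dual certificate over the parent need exist), so the argument must route each point through whichever child actually contains it rather than attempting to combine the two certificates algebraically --- and, relatedly, not overclaiming ``global'' validity beyond the subtree rooted at $R$ unless $R=D$.
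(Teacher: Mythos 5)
Your proof is correct and follows essentially the same route as the paper's: pick an exact-feasible point on the parent, locate it in whichever child region contains it, use child-node soundness to get $v\models C_i$, apply the dual certificate to bound $g^\top v$ by $\beta_i\le\beta$. Your added remarks on the validity scope of the learned lemma (subtree of $R$ versus genuinely global when $R=D$) are a sensible clarification but do not change the argument.
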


\begin{proof}
Let $v$ be any exact-feasible assignment in $R$. Since $R = R_1 \cup R_2$,
we have $v \in R_i$ for some $i\in\{1,2\}$. Soundness of relaxation gives $v \models C_i$,
so $g^\top v \le \beta_i \le \max\{\beta_1,\beta_2\}=\beta$.
\end{proof}

\begin{remark}[Solver relevance]
Theorem~\ref{thm:merge} learns a checkable numerical lemma from branch certificates, enabling reuse without heuristic generalization.
\end{remark}

\subsection{Global certificate composition}
\label{sec:globalproof}

\begin{theorem}[Tree certificate soundness]
\label{thm:treeproof}
Assume the verifier explores a finite partition tree of $D$.
For each leaf region $R_\ell$, the verifier stores either:
\begin{enumerate}[leftmargin=2em]
\item a dual-bound certificate proving that $\Safety$ holds on $R_\ell$, or
\item a Farkas certificate proving infeasibility of the counterexample query on $R_\ell$.
\end{enumerate}
Then the conjunction of leaf certificates together with the split annotations
implies $\forall x\in D,\ \Safety(\NNet(x))$.
\end{theorem}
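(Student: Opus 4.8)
The plan is to prove the theorem by structural induction on the finite partition tree, reusing the local proof rules already established: Prune--Bound (Lemma~\ref{lem:prune-bound-sound}) and Prune--Infeasible (Lemma~\ref{lem:prune-infeasible-sound}) close the leaves, and Split (Lemma~\ref{lem:split-sound}, equivalently the refinement partition contract Lemma~\ref{lem:refine-contract}) propagates closure upward through the internal nodes; Lemma~\ref{lem:lifting} is precisely the binary instance of this induction. Throughout I would fix the convention that a node is \emph{closed} when its exact obligation $\mathcal{O}(R,\alpha)$ is false, i.e.\ no $v\in\Exact(R,\alpha)$ violates $\Safety$.

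First I would dispatch the base case. If a leaf $R_\ell$ carries a dual-bound certificate for a safety margin, Proposition~\ref{prop:checker-complete} guarantees the certified entailment is genuine, so the margin is nonnegative on the relaxation store $C_\ell$; invoking the node invariant (Assumption~\ref{assm:sound}) and Lemma~\ref{lem:prune-bound-sound} then yields $\Safety$ on all of $\Exact(R_\ell,\alpha_\ell)$. If instead the leaf carries a Farkas certificate, Proposition~\ref{prop:checker-complete} certifies infeasibility of $C_\ell\wedge\NegP$, and Lemma~\ref{lem:prune-infeasible-sound} gives that $\Exact(R_\ell,\alpha_\ell)$ has no counterexample. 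Either way the leaf is closed, and --- importantly --- this conclusion is obtained purely by checking the stored certificate against the stored store, with no re-solving.

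Next I would handle the inductive step. An internal node has children produced by a recorded split annotation, which by the hypothesis of Lemma~\ref{lem:refine-contract} witnesses the cover $\Exact(R,\alpha)=\bigcup_{k}\Exact(R_k,\alpha_k)$: a domain split records the sub-boxes/half-spaces of $R$, and a phase split records the branched unit $\nu$ together with the sign cut on $s_\nu$ (inactive child: $s_\nu\le 0,\ z_\nu=0$; active child: $s_\nu\ge 0,\ z_\nu=s_\nu$). Assuming inductively that every child is closed, the ``all children closed $\Rightarrow$ parent closed'' direction of Lemma~\ref{lem:split-sound} closes the parent --- note only the cover, not literal disjointness, is needed for this direction. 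Finitely many applications up the tree close the root, whose region--phase pair is $(D,\emptyset)$.

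Finally I would unpack ``root closed'' into the stated universal. By Definition~\ref{def:semantics}, $\Exact(D,\emptyset)$ is the set of assignments $v$ consistent with the exact network semantics together with $x\in D$; for each $x\in D$ the exact forward evaluation of $\NNet$ produces such a $v$ with $z^{(L)}=\NNet(x)$. Since $\mathcal{O}(D,\emptyset)$ is false, no such $v$ satisfies $\neg\Safety$, hence $\Safety(\NNet(x))$ for all $x\in D$. The main obstacle is bookkeeping rather than mathematics: one must pin down that the exported artifact --- the leaf-certificate family together with the split-annotation family --- carries enough syntactic data for a checker to re-derive the cover hypotheses of Lemma~\ref{lem:split-sound} (which variable/phase was branched, on which threshold) and to pair each leaf with the exact store $C_\ell$ against which its certificate validates, so that the bottom-up induction is \emph{replayable without solving any LP or exact query}. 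I would therefore make this explicit by defining the tree certificate with exactly these fields, and flag the phase-split boundary slice $s_\nu=0$ as the one spot needing an explicit convention if one wants Lemma~\ref{lem:refine-contract}'s disjointness clause satisfied literally rather than only its cover clause.
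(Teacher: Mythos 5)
Your proposal is correct and follows essentially the same route as the paper: the paper's proof simply flattens the tree into a disjoint leaf partition of $D$ and notes that each leaf certificate closes its leaf, which is exactly what your structural induction computes when unrolled. Your additional observations --- that only the covering (not disjointness) direction of the Split rule is needed, that the root obligation must be unpacked via forward evaluation into the stated universal, and that the exported artifact must carry enough split-annotation data to be replayable --- are refinements of, not departures from, the paper's argument.
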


\begin{proof}
By construction, splits form a finite partition of $D$ into disjoint leaves.
Each leaf certificate implies safety (either directly, or by contradiction of $\neg\Safety$).
Therefore safety holds on every leaf, hence on their union, which is $D$.
\end{proof}

\subsection{Conflict cores and clause learning}
Phase commitments can be represented by guard literals that activate corresponding linear constraints.
When a node is pruned by infeasibility, the conflict often depends only on a subset of guards.

\begin{definition}[Conflict core and learned clause]
Let $A$ be the set of active guard literals induced by $\alpha$ at a node.
A \emph{conflict core} is a subset $A_0\subseteq A$ such that the linear constraints induced by $A_0$
are infeasible. The learned clause is
\[
\bigvee_{\ell\in A_0} \neg \ell .
\]
\end{definition}

\begin{proposition}[Soundness of learned conflict clauses]
If the constraints induced by $A_0$ are infeasible, then $\bigvee_{\ell\in A_0}\neg \ell$ is valid:
every exact-feasible model satisfies the clause.
\end{proposition}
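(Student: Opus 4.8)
The plan is to argue by contraposition, mirroring the reasoning already used for Lemma~\ref{lem:guarded-to-clause}: the present proposition is exactly the node-local specialization in which the guard set is the conflict core $A_0$ and the infeasibility of the induced constraints is the hypothesis (a Farkas certificate, via Lemma~\ref{lem:farkas-sound}, would witness it concretely, but we do not need the certificate for the soundness argument). First I would unfold what it means for the learned clause $\bigvee_{\ell\in A_0}\neg\ell$ to fail at an assignment $v$: a disjunction of negated literals is false at $v$ precisely when $v$ makes \emph{every} literal $\ell\in A_0$ true, i.e.\ when all guards in $A_0$ are simultaneously active at $v$.

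Next I would invoke the guard-consequence semantics fixed in Section~\ref{subsec:store-decomp}: each guard literal entails its phase implications ($\ell^{\mathrm{act}}_{\nu}\Rightarrow(z_{\nu}=s_{\nu}\wedge s_{\nu}\ge 0)$ and $\ell^{\mathrm{inact}}_{\nu}\Rightarrow(z_{\nu}=0\wedge s_{\nu}\le 0)$). Hence if $v$ satisfies every guard in $A_0$, then $v$ satisfies all the linear constraints induced by $A_0$; and if in addition $v$ is an exact-feasible model of the node, then by Assumption~\ref{assm:sound} it also satisfies the unguarded part of the store, so it is a feasible point of the full system that the conflict core declares infeasible. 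This is a contradiction, so no exact-feasible model can satisfy every literal of $A_0$; equivalently, every exact-feasible model falsifies at least one literal in $A_0$ and therefore satisfies $\bigvee_{\ell\in A_0}\neg\ell$. That is the claimed validity. A brief closing sentence would note that, because the guard literals range over the fixed global variable vector $v$ and their phase consequences are node-independent, the clause is in fact globally valid and may be promoted to the Boolean learning layer without violating Lemma~\ref{lem:learn-monotone}.

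I do not anticipate a genuine obstacle: the core argument is a single contrapositive step and is structurally identical to Lemma~\ref{lem:guarded-to-clause}. The only point meriting care is notational precision — making explicit that ``the constraints induced by $A_0$'' are exactly the union of the phase implications of the literals in $A_0$ (as in Definition~\ref{def:guarded-cert}), so that ``$v$ satisfies all guards in $A_0$'' is literally the same as ``$v$ lies in the feasible set of those induced constraints.'' Once that identification is stated, the infeasibility hypothesis closes the argument immediately.
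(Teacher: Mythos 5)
Your proof is correct and follows essentially the same route as the paper: the paper states this proposition without an explicit proof, but its proof of the companion Lemma~\ref{lem:guarded-to-clause} is exactly your contrapositive argument (if all guards in the core held, the model would satisfy the induced constraints together with the unguarded store, contradicting infeasibility). One caution on your closing aside: the claim that the clause is automatically \emph{globally} valid is too quick, since the infeasibility of the core may depend on node-local constraints (the region $\Dom(R)$ for $R\subsetneq D$ or bound-dependent hull envelopes in $\Rel$), in which case the clause is only guaranteed valid for exact-feasible models of that node; promotion to the global Boolean layer requires checking that the certificate uses only globally valid rows.
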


\section{End-to-End Algorithm: \textsc{ICL-Verifier}}
We now assemble propagation, exactness gating, refinement, and learning into an end-to-end solver.

\begin{algorithm}[t]
\caption{\textsc{ICL-Verifier}: Incremental Certificate Learning for ReLU verification}
\label{alg:icl}
\begin{algorithmic}[1]
\Require Network $\NNet$, domain $D$, safety predicate $\Safety$, resource limit $B$
\Ensure $\SAT$ with witness $x^\star$, or $\UNSAT$ with proof log, or $\UNKNOWN$
\State Initialize global lemma store $\Learn \gets \emptyset$, global log $W_{\mathrm{glob}}\gets \emptyset$
\State Initialize worklist $Q \gets \{(R=D,\alpha=\emptyset)\}$
\While{$Q\neq \emptyset$ \textbf{and} resources $B$ remain}
  \State Pop $(R,\alpha)$ from $Q$
  \State Create node $(R,\alpha,C,W)$ with $\Learn$ injected into $C$
  \State $\mathrm{status}\gets$ \textsc{PropagateNode}$(R,\alpha,C,W,\Learn)$
  \If{$\mathrm{status}=\mathrm{PRUNE}$}
     \State Add node certificates/cores $W$ to $W_{\mathrm{glob}}$; learn conflict clauses if available
     \State \textbf{continue}
  \EndIf
  \State Extract relaxed witness $\hat v$; validate $\hat x$ by exact forward evaluation
  \If{$\hat x\in R$ \textbf{and} $\neg\Safety(\NNet(\hat x))$}
     \State \Return $\SAT$ with $x^\star=\hat x$
  \EndIf
  \State Optionally invoke selective exact check on reduced unstable set $U'$
  \If{exact check returns $\UNSAT$}
     \State Log UNSAT certificate/core into $W_{\mathrm{glob}}$; optionally add reusable lemmas into $\Learn$
     \State \textbf{continue}
  \EndIf
  \State Choose a refinement (phase split or domain split) using bound widths / violation scores
  \State Push children $(R_1,\alpha_1)$ and $(R_2,\alpha_2)$ into $Q$
\EndWhile
\State \Return $\UNKNOWN$
\end{algorithmic}
\end{algorithm}

\section{Hybrid Symbolic--Relaxation Verification (HSRV)}
ICL-Verifier can be presented as a general architecture. For a solver-centric paper, a compact
hybrid pipeline view is also useful. The following HSRV algorithm emphasizes the three-direction
strategy: relaxation pruning, exact checks, and splitting.

\begin{algorithm}[t]
\caption{\textsc{HSRV}: Hybrid Symbolic--Relaxation Verification}
\label{alg:hsrv}
\begin{algorithmic}[1]
\Require Network $\NNet$, domain $D$, safety predicate $\Safety$
\Ensure $\SAT$ (witness) or $\UNSAT$ (proved safe) or $\UNKNOWN$ (optional cutoff)
\State Initialize worklist $Q\gets \{D\}$; initialize learned constraints $L\gets \emptyset$
\While{$Q\neq \emptyset$}
  \State Pop a region $R$ from $Q$
  \State Compute sound bounds on pre-activations over $R$ (propagation + tightening)
  \State Simplify stable ReLUs; build relaxed constraints $\Crelax(R)$
  \If{$\Crelax(R)$ implies $\Safety$ on $R$}
     \State \textbf{continue} \Comment{soundly prune region}
  \EndIf
  \State Build exact symbolic constraints $\Cexact(R)$ for remaining unstable ReLUs
  \State Solve $\Cexact(R)\wedge \neg\Safety$ incrementally with learned $L$
  \If{$\SAT$}
     \State \Return $\SAT$ with witness $x$
  \ElsIf{$\UNSAT$}
     \State Optionally extract proof certificate / UNSAT core and add to $L$
     \State \textbf{continue}
  \Else
     \State Choose a branching ReLU / split dimension; partition $R$ into $R_1,R_2$; push into $Q$
  \EndIf
\EndWhile
\State \Return $\UNSAT$
\end{algorithmic}
\end{algorithm}

\section{Correctness Guarantees}
We state soundness for both algorithmic views and a conditional completeness result under exhaustive splitting.

\begin{theorem}[Soundness of ICL/HSRV]
\label{thm:soundness}
If Algorithm~\ref{alg:icl} or Algorithm~\ref{alg:hsrv} returns $\UNSAT$, then for all $x\in D$ we have
$\Safety(\NNet(x))=\mathrm{True}$. If it returns $\SAT$ with witness $x^\star$, then $x^\star\in D$ and
$\neg\Safety(\NNet(x^\star))$ holds.
\end{theorem}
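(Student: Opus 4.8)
\emph{Plan.} I would prove the two directions separately: the $\SAT$ direction by direct inspection of the control flow, and the $\UNSAT$ direction by a loop invariant that reduces the global claim to the soundness of the individual closing rules already established in Sections~\ref{subsec:rules}--\ref{sec:gate}.

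\emph{SAT direction.} \textsc{ICL-Verifier} (Algorithm~\ref{alg:icl}) returns $\SAT$ only after a relaxed witness has been extracted, its input projection $\hat x$ has been evaluated through the \emph{exact} network semantics (forward evaluation of the $\ReLU$ equations and identity output layer), and both guards $\hat x\in R$ and $\neg\Safety(\NNet(\hat x))$ have been checked; \textsc{HSRV} (Algorithm~\ref{alg:hsrv}) returns $\SAT$ from a model of the exact symbolic encoding $\Cexact(R)\wedge\neg\Safety$, which already pins the exact network semantics and the region constraint $x\in R$. In both cases the returned $x^\star$ lies in some worklist region $R$ and genuinely violates $\Safety$ under the real semantics of $\NNet$. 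Since the initial region is $D$ and $\mathsf{Refine}$ only produces subregions (domain splits shrink $R$; phase splits leave $R$ unchanged, as in Lemma~\ref{lem:refine-contract}), every worklist region satisfies $R\subseteq D$; hence $x^\star\in D$ and $\neg\Safety(\NNet(x^\star))$. No relaxation soundness is invoked, since the decisive check is exact.

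\emph{UNSAT direction.} We read a $\UNSAT$ verdict as the event that the main loop terminates with $Q=\emptyset$ and no prior $\SAT$ return (for \textsc{ICL-Verifier} this is the empty-worklist exit; a resource cutoff instead yields $\UNKNOWN$). The key is the loop invariant that the exact feasible set of the root decomposes as a \emph{disjoint} union
\[
\Exact(D,\emptyset)\;=\;\Bigl(\bigcup_{(R,\alpha)\in Q}\Exact(R,\alpha)\Bigr)\ \cup\ \Bigl(\bigcup_{(R,\alpha)\in\mathrm{Closed}}\Exact(R,\alpha)\Bigr),
\]
where $\mathrm{Closed}$ is the bookkeeping set of obligations already discharged, together with the property that every $(R,\alpha)\in\mathrm{Closed}$ satisfies ``$\mathcal{O}(R,\alpha)$ false'', i.e.\ contains no counterexample. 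Initially $Q=\{(D,\emptyset)\}$ and $\mathrm{Closed}=\emptyset$, so the invariant holds. For maintenance: when $(R,\alpha)$ is popped, a node $(R,\alpha,C,W)$ is built with $\Learn$ injected, and Lemma~\ref{lem:learn-monotone} together with Assumption~\ref{assm:sound} re-establishes $\Exact(R,\alpha)\subseteq\Feas(C)$ at node creation, which is then preserved by each engine transition (propagation by Lemma~\ref{lem:prop-contract}, partial exactness by Lemma~\ref{lem:monotone}). Exactly one of the following then occurs: \emph{(i)} the node is closed by a relaxation-bound argument (the store implies $\Safety$ on $R$): by Lemma~\ref{lem:prune-bound-sound} every exact-feasible point satisfies $\Safety$, so $\mathcal{O}(R,\alpha)$ is false; \emph{(ii)} the node is closed by an infeasibility certificate — a propagation Farkas certificate, an exact-solver $\UNSAT$, or a partial-exact $\UNSAT$: by Lemmas~\ref{lem:prune-infeasible-sound},~\ref{lem:farkas-sound} and Proposition~\ref{prop:partial-unsat-sound} the node has no counterexample; \emph{(iii)} the node returns a validated $\SAT$ (excluded under the $\UNSAT$ hypothesis, since the algorithm then halts); or \emph{(iv)} the node is refined, replacing $(R,\alpha)$ in $Q$ by $\{(R_k,\alpha_k)\}_{k\in K}$, where by Lemma~\ref{lem:refine-contract} (equivalently the Split rule, Lemma~\ref{lem:split-sound}) the sets $\Exact(R_k,\alpha_k)$ partition $\Exact(R,\alpha)$, so the disjoint-union structure is preserved. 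In cases (i) and (ii) the obligation moves to $\mathrm{Closed}$ with $\mathcal{O}$ false. One must additionally check that no code path drops a live obligation without closing or refining it — in particular that the $\mathrm{DEFER}$ outcome of the exactness gate always funnels into a refinement step — which is visible from the control flow. At loop exit with $Q=\emptyset$ and $\UNSAT$ reported, the invariant gives $\Exact(D,\emptyset)=\bigcup_{\mathrm{Closed}}\Exact(R,\alpha)$ with each term counterexample-free, so $\mathcal{O}(D,\emptyset)$ is false; unwinding the definition of $\Exact$ (Definition~\ref{def:semantics}) and the counterexample query, this says precisely that for every $x\in D$ the exact output $z^{(L)}(x)$ satisfies $\Safety$. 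This is the execution-trace analogue of Theorem~\ref{thm:treeproof}.

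\emph{Main obstacle.} The argument is conceptually routine once the interface lemmas of Sections~\ref{subsec:contracts}--\ref{sec:gate} are in place; the real work is bookkeeping, and that is where a careless proof would fail. The delicate points are: (a) verifying that the node soundness invariant (Assumption~\ref{assm:sound}) is genuinely re-established at every node creation and preserved by \emph{every} engine transition, which hinges on $\Learn$ — and branch--merge lemmas — containing only globally valid consequences (Lemmas~\ref{lem:learn-monotone} and~\ref{thm:merge}); (b) confirming that the disjoint-cover invariant is never broken by a dropped obligation, i.e.\ that every exit from processing a popped node is one of close / return-$\SAT$ / push-children; and (c) reconciling the informal pseudocode — \textsc{ICL-Verifier} as written never literally prints $\UNSAT$ — by fixing the convention that empty-worklist termination without resource exhaustion \emph{is} the $\UNSAT$ verdict.
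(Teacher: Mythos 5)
Your proposal is correct and follows essentially the same route as the paper's own (sketched) proof: exact forward validation justifies the $\SAT$ verdict, and the $\UNSAT$ verdict follows because closed nodes partition the exact feasible set and each is discharged only by a sound pruning rule. Your version is simply a more rigorous elaboration — the explicit disjoint-cover loop invariant, the appeal to the interface lemmas, and the observations about $\mathrm{DEFER}$ and the empty-worklist convention all make precise what the paper's proof sketch leaves implicit.
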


\begin{proof}[Proof sketch]
A region/node is removed only if (i) a sound relaxation implies safety on that region (sound pruning),
or (ii) an exact check proves the counterexample query infeasible in that region (UNSAT).
In the SAT case, we return only after exact forward validation of the candidate witness.
Since the explored/pruned regions form a partition (by refinement), no counterexample can exist
if all regions are safely pruned/closed.
\end{proof}

\begin{theorem}[Completeness under exhaustive splitting]
\label{thm:complete}
Assume $D$ is compact and all nonlinearities are PWL (ReLU/MaxPool/etc.).
If the refinement strategy eventually enumerates all phase assignments for unstable PWL units
(equivalently: reaches nodes where all phases are fixed), then the algorithm is complete:
it returns $\SAT$ iff a counterexample exists; otherwise it returns $\UNSAT$.
\end{theorem}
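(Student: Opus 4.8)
The plan is to derive both halves of the claim — ``returns $\SAT$ iff a counterexample exists'' and ``otherwise returns $\UNSAT$'' — from two facts: (a) under the exhaustiveness hypothesis the search loop terminates, and (b) at a node where every PWL phase is fixed, every decision the solver makes is \emph{exact}, not merely sound, so at the leaves the relaxation gap has collapsed. The soundness direction (every $\SAT$ is a validated witness, every $\UNSAT$ is correct) is already Theorem~\ref{thm:soundness}; the new content is the reverse direction plus termination. First I would argue termination: a phase split strictly decreases the number of unfixed PWL units at a node, and $\NNet$ has only finitely many such units, while a domain split never creates unstable units; so along any root-to-leaf branch only finitely many phase splits occur, and the hypothesis (every branch eventually reaches a node with all phases fixed) says every branch is finite. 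Since $\mathsf{Refine}$ is finitely branching (binary for ReLU, bounded-arity for MaxPool), König's lemma makes the generated tree finite, so with no resource cutoff the while-loop of Algorithm~\ref{alg:icl} (resp.\ Algorithm~\ref{alg:hsrv}) halts, either having returned $\SAT$ or having emptied the worklist.

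Next I would pin down exactness at a leaf $(R,\alpha,C,W)$ with $\alpha$ fixing the phase of every PWL unit. There $\mathsf{Hull}$ adds nothing (the unstable set $U$ is empty) and $\Rel$ consists only of the exact linear specializations $z_\nu=s_\nu\wedge s_\nu\ge 0$ or $z_\nu=0\wedge s_\nu\le 0$ dictated by $\alpha$ (and by prior certified stabilization); combined with $\Aff$, $\Dom(R)$, the phase-sign constraints in $\mathrm{PhaseConsequences}(\alpha)$, and $\Learn$ (which by Lemma~\ref{lem:learn-monotone} removes no exact-feasible point), the store $C$ is a purely linear system with $\mathrm{Feas}(C)=\Exact(R,\alpha)\cap\{v:\neg\Safety(z^{(L)})\}$ — i.e.\ exactly the counterexamples of the node, with no outer-relaxation slack. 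Rational LP feasibility decides this system exactly: if infeasible, the Farkas certificate (Lemma~\ref{lem:farkas-sound}) closes the node and there is genuinely no counterexample in $(R,\alpha)$; if feasible, the extracted witness $\hat v$ is a bona fide exact assignment (the fixed phases force $s^{(i)},z^{(i)}$ to agree with $\NNet(\hat x)$ on $R$, a unit sitting exactly at $s_\nu=0$ being harmless since both specializations agree there), so exact forward evaluation confirms $\hat x\in R$ and $\neg\Safety(\NNet(\hat x))$ and the verifier returns $\SAT$ (via the post-propagation validation step of Algorithm~\ref{alg:icl}, or the exact branch of Algorithm~\ref{alg:hsrv}). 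Hence at such leaves the solver never prunes a node that contains a counterexample and never $\mathrm{DEFER}$s.

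Then I would assemble the two directions. Iterating the Split rule down the finite tree (Lemma~\ref{lem:split-sound}, equivalently the partition contract Lemma~\ref{lem:refine-contract}) shows the leaf obligations partition the root obligation $\mathcal{O}(D,\emptyset)$; so any counterexample $x^\star\in D$ lies in $\Exact(R_\ell,\alpha_\ell)$ for exactly one leaf $\ell$. No ancestor of $\ell$ can have been pruned, since pruning there requires a sound relaxation implying safety or a sound infeasibility certificate, either of which contradicts the presence of $x^\star$; therefore the search reaches $\ell$ (unless it already returned $\SAT$ at some node, which by the forward-validation guard is also a true counterexample), and at $\ell$ the previous paragraph yields $\SAT$. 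This gives the $\Leftarrow$ direction. Conversely, if no counterexample exists, the loop terminates with an empty worklist, every node having been closed by a sound certificate, and $\SAT$ is never returned (forward validation always fails), so the algorithm returns $\UNSAT$ — correct by Theorem~\ref{thm:soundness}. Combining, $\SAT$ is returned iff a counterexample exists, and $\UNSAT$ otherwise.

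I expect the termination step to be the main obstacle, specifically justifying finiteness when domain splitting is freely interleaved with phase splitting: the hypothesis must be read as asserting that \emph{every} branch becomes fully phase-fixed after finitely many refinements, so that König's lemma applies; I would either restrict the refinement policy to make this literal or carry it as a standing assumption, and I would remark that a refinement that loops forever on domain splits without ever exhausting phases is exactly the case the hypothesis excludes. The two secondary points to be careful about are both in the leaf analysis: confirming $\mathrm{Feas}(C)=\Exact(R,\alpha)\cap\{\neg\Safety\}$ rigorously (using Lemma~\ref{lem:learn-monotone} for $\Learn$ and the explicit presence of the phase-sign constraints so that stable and $\alpha$-committed units all carry their exact specialization), and confirming that the LP witness is projected and forward-evaluated consistently with $\alpha$ so that the returned $\hat x$ genuinely violates $\Safety$.
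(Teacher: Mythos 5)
Your proposal is correct and follows the same core approach as the paper's (quite terse) proof sketch: once all PWL phases are fixed at a node, the constraints collapse to a purely linear, exactly decidable system, and exhaustive splitting covers every total phase assignment, so the leaf decisions compose into a complete answer. You supply two details the paper leaves implicit — termination via finite branching plus K\"onig's lemma, and the equality $\mathrm{Feas}(C)=\Exact(R,\alpha)\cap\{\neg\Safety\}$ at fully-fixed leaves — both of which are sound and strengthen rather than alter the argument.
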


\begin{proof}[Proof sketch]
Fix any total assignment of phases to all unstable PWL units. Under that assignment, the network
constraints reduce to a purely linear system; feasibility is decidable.
Exhaustive splitting eventually reaches a region/node consistent with each total assignment,
thereby deciding feasibility for all cases. Hence completeness follows.
\end{proof}
\section{Complexity and Practical Considerations}
Even for ReLU networks and linear safety properties, verification is NP-complete in the worst case,
since ReLU phase choices encode Boolean structure coupled with linear feasibility.
The solver design goal is therefore to reduce \emph{effective} branching:
tighten bounds to stabilize units, strengthen relaxations with learned lemmas,
and defer exact checks to small reduced unstable sets.

\noindent\textbf{Implementation hooks (solver engineering)..}
\begin{enumerate}[leftmargin=2.2em]
\item \textbf{Incremental LP:} warm-start and factorization reuse across propagation iterations.
\item \textbf{Selective targets:} run bound tightening only on influential or near-zero pre-activations.
\item \textbf{Refinement heuristics:} split the most ambiguous unit (largest hull gap or violation),
or split domain dimensions that shrink many bounds simultaneously.
\item \textbf{Learning policy:} store only globally reusable lemmas; garbage-collect weak or redundant ones.
\end{enumerate}

\section{Evaluation Protocol and Benchmarking}
This paper is solver-method oriented. For a Q1-style evaluation section, we recommend reporting:
\begin{enumerate}[leftmargin=2.2em]
\item \textbf{Benchmarks:} standardized instances in VNN-LIB / VNN-COMP suites (networks in ONNX) \cite{vnncompSTTT23,vnncomp2021,vnncomp2022,vnnlib_coconet}
.
\item \textbf{Metrics:} solved instances, time to first decision, number of splits, number of stabilized ReLUs,
LP calls, exact-gate invocations, learned-lemma reuse rate, and UNSAT-certificate export rate.
\item \textbf{Ablations:} (i) no learning, (ii) only linear lemmas, (iii) only conflict clauses,
(iv) different exact-gate thresholds, (v) different branching heuristics.
\item \textbf{Reproducibility:} configuration tables (LP solver, SMT/MILP backend, timeouts),
random seeds for tie-breaking, and exact benchmark identifiers.
\end{enumerate}

\section{Related Work (Brief)}
Complete verifiers based on SMT/MILP include Reluplex and Marabou  \cite{reluplex2017,marabouCAV19,marabouTACAS24,fastcompleteICLR21}
.
Sound-but-incomplete approaches include abstract interpretation systems (e.g., AI2) and
tight PWL domains (e.g., DeepPoly). Symbolic representation approaches (e.g., SyReNN) compute
explicit region structure for analysis tasks \cite{deeppoly2019,syrnn}
.
Our contribution is architectural: a unified incremental pipeline coupling propagation, exactness gating,
and cross-node certificate learning \cite{ai2,reluval,neurify}
.

\section{Certificate-Based Validation: Worked Example and Checker Costs}
\label{sec:validation}

To provide reproducible evidence without performance claims, we validate the core claims via checkable certificate artifacts and a fully explicit worked instance.

\subsection{Checker obligations and verification cost}
\label{subsec:checker-cost}

All pruning and learning steps in our calculus reduce to checking rational identities and inequalities.
Let a linear store be represented as $A v \le b$ with $A\in\Q^{m\times N}$ and $b\in\Q^{m}$.

\begin{proposition}[Cost of certificate checking]
\label{prop:check-cost}
Given $(A,b)$ and a purported certificate vector $\lambda\in\Q^m$, the checker for
(i) dual bound certificates (Definition~\ref{def:dual-cert}) and
(ii) Farkas infeasibility certificates (Definition~\ref{def:farkas-cert})
runs in time linear in the number of nonzeros of $A$ and the encoding size of the rationals
appearing in $(A,b,\lambda)$.
\end{proposition}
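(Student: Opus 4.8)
The plan is to decompose each checker into a fixed short sequence of primitive rational operations, count those operations, and then bound the bit-size of every number they touch. Recall from Subsection~\ref{subsec:checkers} that accepting a dual bound certificate (Definition~\ref{def:dual-cert}) amounts to verifying (i) $\lambda\ge 0$ componentwise, (ii) $\lambda^\top A = g^\top$, and (iii) $\lambda^\top b\le\beta$, while accepting a Farkas certificate (Definition~\ref{def:farkas-cert}) requires the same obligations with $g$ replaced by the zero vector and the last comparison made strict. So the Farkas case is literally an instance of the dual case plus one strict comparison, and it suffices to bound the cost of (i)--(iii).

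First I would dispatch obligation (i): a componentwise sign test over $\lambda\in\Q^m$ is $m$ numerator-sign reads, costing $O(m)$ operations and time linear in the encoding size of $\lambda$. Next, obligation (ii): $\lambda^\top A$ is a sparse matrix--vector product accumulated column by column, so each stored nonzero $A_{ij}$ contributes exactly one multiplication $\lambda_i A_{ij}$ and one addition into the accumulator for column $j$; this is $\mathrm{nnz}(A)$ rational multiply--add steps, followed by at most $N$ equality comparisons against the entries of $g$. Obligation (iii) is a single inner product $\lambda^\top b$ (one multiply--add per nonzero of $b$, hence at most $m$) and one final comparison with $\beta$. Summing, the checker performs $O(\mathrm{nnz}(A)+m+N)$ primitive operations of addition, multiplication, and comparison over $\Q$ --- no LP solve, no optimization, no division --- which already gives the claimed linear bound in a unit-cost arithmetic model.

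To make the bound precise at the bit level, I would have the checker first normalize the certificate so that all entries of $A$, $b$, $\lambda$ (and $g,\beta$) are expressed over a common denominator, so that every primitive step becomes integer arithmetic on numerators; this normalization is itself linear in the input encoding size. Let $s$ be the maximum numerator bit-length after normalization. A product of two such integers has bit-length at most $2s$, and each column accumulator in $\lambda^\top A$ (and likewise $\lambda^\top b$) is a sum of at most $m$ such products, hence of bit-length at most $2s+\lceil\log_2 m\rceil$. Thus every integer handled by the checker has bit-length $O(s+\log m)$, each primitive step costs $O(M(s+\log m))$ bit operations (or $O((s+\log m)^2)$ with schoolbook multiplication), and since $\mathrm{nnz}(A)\cdot s$, $m\cdot s$, and $N\cdot s$ are each at most the encoding size of $(A,b,\lambda,g,\beta)$, the total is controlled by the input encoding size up to the intrinsic cost of arithmetic on numbers of that size.

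The main obstacle --- and the only genuinely subtle point --- is exactly this last accounting: done naively, fraction-by-fraction multiplication would let denominators multiply out and push intermediate encodings past linear size. The common-denominator normalization together with the $2s+\log m$ bound on accumulator bit-length is what prevents the blow-up, so I would present those two observations as the crux of the proof and treat the remainder (the operation count, the reduction of the Farkas checker to the dual checker, the sign and comparison tests) as routine bookkeeping. This is precisely why certificate checking is asymptotically cheaper than the propagation and exact-gate engines that produced the certificate.
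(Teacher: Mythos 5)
Your first two paragraphs are exactly the paper's proof: decompose each checker into the componentwise sign test on $\lambda$, the sparse product $\lambda^\top A$ (one multiply--add per stored nonzero), the inner product $\lambda^\top b$, and a constant number of comparisons, with the Farkas checker being the dual checker specialized to $g=0$ and a strict final inequality. At that level --- counting rational arithmetic operations --- your argument is correct and coincides with the paper's, which stops there.

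Where you go further, into bit-level accounting, you introduce a gap that the paper avoids by not attempting this refinement. The claim that putting all entries of $(A,b,\lambda,g,\beta)$ over a common denominator ``is itself linear in the input encoding size'' is false in general: with $k$ entries whose denominators are pairwise coprime and each of bit-length $t$, the common denominator has bit-length $kt$, so each normalized numerator acquires bit-length $\Theta(kt)$ and the normalized representation has total size $\Theta(k^2 t)$, quadratic in the input size $\Theta(kt)$. Your subsequent bound then inherits this blow-up, since your $s$ is measured \emph{after} normalization. The final step also reverses an inequality: the encoding size of $A$ is at most $\mathrm{nnz}(A)\cdot s$, not at least, so ``$\mathrm{nnz}(A)\cdot s$ is at most the encoding size'' does not hold when entry sizes are uneven, and $\mathrm{nnz}(A)$ steps each costing $O(M(s))$ need not be linear in the encoding size. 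A repair is to check each scalar identity $\sum_i \lambda_i A_{ij}=g_j$ column-locally (clearing denominators only within a column, or cross-multiplying pairwise), which keeps intermediate sizes polynomial per column; but even then the honest conclusion is a linear number of exact rational operations, each of cost polynomial in the operand encodings --- which is the reading under which the proposition, and the paper's one-line proof, actually hold. You correctly identified denominator growth as the one subtle point; the proposed fix just does not discharge it as stated.
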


\begin{proof}
The checker evaluates $\lambda\ge 0$ componentwise, computes the products $\lambda^\top A$ and
$\lambda^\top b$ once, and verifies the required equalities/inequalities.
Each computation is a sparse matrix--vector multiplication plus scalar comparisons.
\end{proof}

Proposition~\ref{prop:check-cost} is the key non-empirical robustness hook: correctness of pruning and
learned lemmas does not depend on numerical tolerances of an LP/MILP/SMT backend, but on a deterministic
rational checker applied to exported artifacts.

\subsection{A minimal end-to-end case study (explicit certificates)}
\label{subsec:worked-example}

We give a concrete ReLU network and a safety query for which (i) propagation yields a globally reusable
bound, (ii) the negated query becomes LP-infeasible, and (iii) a short Farkas certificate closes the node.

\noindent\textbf{Network and domain..}
Let $\NNet:\R\to\R$ be defined by the following two-ReLU computation:
\[
x\in[0,1],\qquad s_1 = 2x-1,\ \ z_1=\ReLU(s_1),\qquad s_2=\tfrac12-x,\ \ z_2=\ReLU(s_2),
\qquad y = z_1 - z_2.
\]
Let the safety predicate be the linear bound $\Safety(y):\ y\le 1$.
To avoid strict inequalities in certificates, we consider the margin-violating query
\[
\neg\Safety_\varepsilon(y):\ y \ge 1+\varepsilon,\qquad \varepsilon=\tfrac{1}{10}.
\]
Refuting $\neg\Safety_\varepsilon$ implies the robust safety claim $y\le 1+\varepsilon$ on the domain.

\noindent\textbf{Propagation consequences..}
From $x\in[0,1]$ we have $s_1\in[-1,1]$ and $s_2\in[-\tfrac12,\tfrac12]$.
Hull insertion (Section~\ref{sec:prop}) therefore yields the standard envelope consequences
$0\le z_1\le 1$ and $z_2\ge 0$.
Together with the affine output constraint $y=z_1-z_2$, we obtain the valid relaxation implication
$y\le z_1\le 1$; hence $y\ge \tfrac{11}{10}$ is impossible.

\noindent\textbf{A Farkas certificate for pruning..}
Consider the reduced variable vector $v=(z_1,z_2,y)^\top$ and the following subset of node constraints:
\[
\begin{array}{rcl}
(1)& z_1 &\le 1,\\
(2)& -z_2 &\le 0,\\
(3)& -z_1 + z_2 + y &\le 0 \qquad (\text{equivalently } y\le z_1-z_2),\\
(4)& -y &\le -\tfrac{11}{10}\qquad (\text{equivalently } y\ge \tfrac{11}{10}).
\end{array}
\]
This is of the form $A v\le b$ with
\[
A=\begin{pmatrix}
 1& 0& 0\\
 0&-1& 0\\
-1& 1& 1\\
 0& 0&-1
\end{pmatrix},
\qquad
b=\begin{pmatrix}
 1\\
 0\\
 0\\
 -\tfrac{11}{10}
\end{pmatrix}.
\]
Let $\lambda=(1,1,1,1)^\top\ge 0$. Then
\[
\lambda^\top A = (1,1,1,1)
\begin{pmatrix}
 1& 0& 0\\
 0&-1& 0\\
-1& 1& 1\\
 0& 0&-1
\end{pmatrix}
= (0,0,0),
\qquad
\lambda^\top b = 1+0+0-\tfrac{11}{10} = -\tfrac{1}{10} < 0.
\]
Hence $\lambda$ is a valid Farkas infeasibility certificate (Definition~\ref{def:farkas-cert}),
so the node is closed by the Prune--Infeasible rule (Lemma~\ref{lem:prune-infeasible-sound}).

\subsection{Demonstrating Branch--Merge learning on the same instance}
\label{subsec:merge-demo}

We now illustrate the Merge rule as an actual proof step producing a parent lemma.
Split the input domain into $R_1=[0,\tfrac12]$ and $R_2=[\tfrac12,1]$.

\noindent\textbf{Child bounds (certifiable)..}
On $R_1$, $s_1=2x-1\le 0$ and $s_2=\tfrac12-x\ge 0$, so stabilization yields
$z_1=0$ and $z_2=s_2=\tfrac12-x$, hence $y=x-\tfrac12\le 0$.
On $R_2$, $s_1\ge 0$ and $s_2\le 0$, so $z_1=s_1=2x-1$ and $z_2=0$, hence $y=2x-1\le 1$.

These child bounds can be exported as dual certificates over the corresponding child linear stores.
For example, on $R_2$ the inequality $y\le 1$ follows by adding the constraints
$(y-2x\le -1)$ and $(2x\le 2)$ with multipliers $(1,1)$, which is a dual certificate instance.

\noindent\textbf{Merge into a parent lemma..}
Let $g^\top v$ select the output coordinate $y$.
From the child certificates we obtain bounds $\beta_1=0$ on $R_1$ and $\beta_2=1$ on $R_2$.
By Theorem~\ref{thm:merge}, the merged lemma $y\le \max\{\beta_1,\beta_2\}=1$ is valid on the parent region
$R=R_1\cup R_2$ and may be injected into $\Learn$ as a globally reusable constraint for this template.
This exhibits the learning rule as a \emph{compositional certificate transformation}, not a heuristic cut.

\section{Conclusion}
We presented a solver-grade, certificate-carrying architecture for PWL neural verification that
integrates incremental LP propagation, a selective exactness gate, and proof-calculus-level learning.
Sound pruning and validated counterexamples follow from a small set of independently checkable artifacts
(dual bounds, guarded cores, and Farkas certificates), while branch--merge learning lifts branch-local
certificates into reusable parent lemmas. To support reproducibility without relying on empirical claims,
Section~\ref{sec:validation} provides an explicit end-to-end worked instance together with a short
infeasibility certificate and a merge demonstration, and formalizes the cost of certificate checking
as a sparse arithmetic procedure.

\section*{Acknowledgements}
The author gratefully acknowledges the administrative and academic support provided by the
Commissioner of Collegiate Education (CCE), Government of Andhra Pradesh, and the
Principal, Government College (Autonomous), Rajahmundry.

\section*{Statements and Declarations}

\subsection*{Funding}
The author received no external funding for this work.

\subsection*{Competing interests}
The author declare that they have no competing interests.

\subsection*{Data availability}
No new datasets were generated or analyzed in this study.

\subsection*{Code availability}
No software artifact is released with this manuscript; the algorithms are provided in pseudocode for reproducibility.

\subsection*{Author contributions}
C.~Gokavarapu: conceptualization, methodology, formal analysis, writing---original draft, writing---review \& editing.

\end{document}